\documentclass[letterpaper, 10 pt, journal]{IEEEtran}

\usepackage{graphicx}
\graphicspath{ {images/} }
 \usepackage{color}
 \usepackage{graphicx}
\usepackage{caption}
\usepackage{subcaption}
\usepackage{amsmath}
\usepackage[utf8]{inputenc}
\usepackage[english]{babel}
\usepackage{amsthm}
 \usepackage{enumitem}
\usepackage{graphics} 
\usepackage{epsfig} 
\usepackage{mathptmx} 
\usepackage{times} 
\usepackage{amsmath} 
\usepackage{amssymb}  
\DeclareMathOperator{\E}{\mathbb{E}}
\DeclareMathOperator*{\argmax}{arg\,max}
 \usepackage{comment}
\newtheorem{theorem}{Lemma}

\newtheorem{cor}{Corollary}
\newtheorem{asum}{Assumption}
\newtheorem{remark}{Remark}

\newtheorem{example}{Example}
\newtheorem{lem}{Lemma}
\newtheorem{pro}{Proposition}
\newtheorem{thm}{Theorem}


\hyphenation{}

\begin{document}
\title{A Contract Design Approach for Phantom Demand Response}

\author{Donya~Ghavidel Dobakhshari\thanks{The authors are with the Department of Electrical Engineering, University of Notre Dame, IN, USA. Email:  (\texttt{dghavide, vgupta2)@nd.edu.} A preliminary version of these results were presented at American Control Conference, 2016~\cite{don}. As compared to that paper, this manuscript considers the case when multiple customers are present. The work was supported  in part by NSF grant... } and         Vijay~Gupta }

\maketitle



%
\IEEEpeerreviewmaketitle

\begin{abstract}
We design an optimal contract between a demand response aggregator (DRA) and power grid customers for incentive-based demand response. We consider a setting in which  the customers are asked to reduce their electricity consumption by the DRA and they are compensated for this demand curtailment. However, given that the DRA must supply every customer with as much power as she desires, a strategic customer can temporarily increase her base load in order to report a larger reduction as  part of the demand response event. The DRA wishes to incentivize the customers  both to  make  costly effort to reduce load and to not falsify the reported load reduction. We model this problem  as a contract design problem and present a solution. The proposed contract consists of two parts: a part that depends on (the possibly inflated) load reduction as measured by the DRA and another that provides a share of the  profit that accrues to the DRA through the demand response event to the customers. Since this profit accrues due to the total load reduction because of the actions taken by all the customers, the interaction among the customers also needs to be carefully included in the contract design.  The contract design and its properties are presented and illustrated through examples. \end{abstract}


\section{Introduction}
Demand Response (DR), in which a utility company or an aggregator motivates customers to curtail their power usage, has now become an acceptable method in situations where high peaks in demand occur, transmission congestion increases, or some power plants are not available to generate enough power~\cite{imp4, va, den, imp6}. Per the  Federal Energy Regulatory Commission (FERC), demand response is the change in electric usage by end-use customers from their normal consumption patterns in response to changes in the price of electricity or any other incentive~\cite{balijepalli2011review}.

In general, DR programs may be divided into two main categories: Price Based Programs (PBP) and Incentive Based Programs (IBP).  PBPs refer to schemes in which the electricity price varies as a function of variables such as the  time of usage or the total demand, with the expectation that the consumers will adjust their demand in response to such a price profile. On the other hand, IBPs offer a constant price for power to every user; however, customers are offered a reward if they reduce their demand when the utility company desires. Classically, these incentives were proposed to be constant and based only on customer participation in the program; however, market-based incentives that offer a reward that varies with the amount of load reduction that a customer achieves have also been proposed. There exists a rich literature for IBPs (e.g, see~\cite{mo,sam,ro, wa} and the references therein) studying the design of suitable incentives with aims such as social welfare maximization, minimization of electricity generation and delivery costs, and reducing renewable energy supply uncertainty for demand response.

In this paper, we consider an incentive based program for demand response where the  customers are rewarded financially by a demand response aggregator (which role can also be filled by a utility company) for their load reduction during DR events. When called upon to reduce their loads, each customer puts in some effort to achieve a true value of load reduction. The effort is costly to the customers since it causes them discomfort. Further,   the amount of effort expended is private knowledge for each customer. Incentivizing customers to put in effort in this setting  is the problem of \textit{moral hazard}~\cite[Chapter~4]{laf}. Following the rich literature back to Holmstrom~\cite{holmstrom1979moral}, as a means to incentivize the customer to put in ample effort in the presence of moral hazard, the  demand response aggregator (DRA) must pay each customer proportional to the effort that the customer puts in.  However, by taking advantage of the fact that the DRA must supply as much power as the customer desires and by anticipating the demand response call, a strategic customer can artificially inflate her base load before an expected DR event. In other words, the true amount of load reduction is also private knowledge for the customer. By artificially inflating the base load, for the same \textit{nominal} load reduction,  the customer can report more \textit{measured} load reduction and gain more financial reward from the DRA~\cite{chao20, chao}.  This implies that the problem of \textit{adverse selection}~\cite[Chapter~3]{laf} is also present. 

That such strategic behavior by customers to exploit IBPs is not idle speculation has been pointed out multiple times~\cite{chao}, \cite{wolak}.  In 2013, it was revealed that the Federal Energy Regulatory Commission issued large civil penalties to customers for exactly this sort of strategic behavior~\cite{FERC2013}.  For instance, Enerwise paid a civil penalty of \$780,000 for wrongly claiming on the behalf of its client, the Maryland Stadium Authority (MSA), that it reduced the baseline electricity usage in 2009 and 2010 at Camden Yards. It may also be pointed out that the possibility of behavior in which ``phantom DR occurs through inflated baseline''   to obtain ``payments for fictitious reductions'' was pointed out in a related but different context  by California ISO in its opinion on FERC order 745~\cite{caliso}. To avoid this {\em phantom demand response}, a payment structure to incentivize a rational customer to provide maximal effort and low (or no) misreporting is needed. 

While there is much literature that uses competitive game theory in smart grids particularly for solutions based on concepts such as pricing (e.g., 
\cite{saad},~\cite{fadlu} and the references therein), much of this literature assumes the users to be truthful and non-anticipatory. While this is often a good assumption in cases where users are price taking and either unable or unwilling to transmit false information, it can lead to overly optimistic results in the framework discussed above. We consider anticipatory and strategic customers that maximize their own profit by predicting the impact of their actions and possibly falsifying any information they transmit. 

Of more interest to our setting is the literature on contract design for DR with information asymmetry and strategic behavior. For instance,~\cite{chao20}  proposed a DR contract that avoids inefficiencies in the presence of a strategic sensor; however, the possibility of baseline inflation was not considered.~\cite{nguy} proposed a DR market to maximize the social welfare; however, the baseline consumption levels were assumed to be known. The works closest to ours in this stream are~\cite{chen20} and~\cite{pra}. \cite{chen20} considered a two-stage game for DR.  Assuming knowledge of the utility function of the  consumer, the authors proposed using a linear penalty  function for the deviation of the usage level from the reported baseline to induce users to report their true baselines, while at the same time adjusting the electricity price appropriately to realize the desired load reduction.~\cite{pra} designed a two-stage mechanism to induce truth telling by the customer irrespective of the utility function of the DRA. The proposed mechanism relied on assuming a linear utility function for the customers, a deterministic  baseline and  a low probability of occurrence of the DR event.  Unlike these works, we design a contract which maximizes the utility function of the DRA (which includes the payment to the customer) and a more general utility function for the customer that includes falsification and effort costs, as well as constraints of individual rationality.


In economics, contract design with either moral hazard or adverse selection alone has a vast literature (for a summary, see, e.g.,~\cite[Chapter~14B]{mas1} and~\cite[Chapter~14C]{mas1}). In the problem we consider, moral hazard followed by adverse selection arises. This combination  is much less discussed in the literature and is significantly more difficult since incentives to solve moral hazard (for instance, through payments that are an increasing function of the reported effort) may, in fact, exacerbate the problem of adverse selection by incentivizing larger falsification of the reported effort. A notable exception is~\cite[Chapter~7]{laf} which considers a specific buyer-seller framework with two hidden actions and two hidden pieces of information. In this stream, the closest  works to our setup  are~\cite{imp3, imp2} who study  the problem of incentivizing a single manager (a single customer in our framework) by the owner of a firm (the DRA in our framework) and propose a contract by assuming accurate revelation of the private information of the manager to the owner in long run. Our formulation includes the more general case of multiple customers with the DRA obtaining  non-accurate knowledge of the load reduction by the customers even in the long run.


The chief contribution of this paper is the design of a contract to  maximize the utility function  of the DRA while incentivizing rational customers to expend costly effort to reduce their load. The contract addresses the issue of moral hazard followed by adverse selection that is enabled by the fact that knowledge of the effort put in as well as that of the true load savings realized are both private to the customers. The contract that we propose consists of two parts: one part that pays the customer based on the (possibly falsified) reported load reduction, and another that provides a share of the profit that accrues to the DRA through the demand response event to the customer. One interesting result is that the optimal contract may lead to both \textit{under-reporting} and  \textit{over-reporting} of load reduction by the customer depending on the true load reduction realized by the customer. In other words, if a strategic customer wishes to maximize her profit, she may sometimes decrease her base load before the DR event to under-report her power reduction as a part of DR event. We also  show that the DRA can realize any arbitrary  demand reduction by contracting with an appropriate number of customers.

The rest of the paper is organized as follows. In Section \ref{sec1}, the problem statement is presented. In Section \ref{sec4}, we propose   a contract structure for the DR problem. Next, in Section \ref{sec3}, we  derive the optimal strategy chosen by DRA and the customers  in response,   discuss the interactions among customers, and  study several extensions of the problem.  In Section \ref{illus}, numerical examples are  provided to illustrate the results.  Section \ref{concl}   concludes the paper and presents some avenues for future work.

\paragraph*{Notation}
$f_{X|A}(x|a)$  (which is often simplified to $f(x|a)$ when the meaning is clear from the context) denotes the probability distribution function (pdf) of random variable $X$ given the event $A=a$. A Gaussian distribution is denoted by  $\mathcal N(m,\sigma^2 )$ where $m$ is the mean and $\sigma $ is the standard deviation. For two functions $g$ and $h$, $g*h$ denotes the convolution between $g$ and $h$.  $\E_X[f]$ specifies that the expectation of function $f$ is taken with respect to the random variable $X$; when $X$ is clear from the context, we abbreviate the notation to $\E[f]$. Given $N$ variables $x_1, \cdots, x_N$, the set defining their collection is denoted by $\{x_i\}_{i=1}^{N}$, or sometimes simply by $\{x_i\}$.


\section{Problem Statement}
\label{sec1}
During a DR event, the DRA calls on the customers to decrease their power consumption. A contract that pays the customers merely for the act of reducing the load  will not incentivize the customers to exert the maximal effort for reducing the load by as much amount as possible. To solve this problem, the DRA may offer a contract that makes the payment to the customer proportional to the load reduction. However, with such a contract, a strategic customer will try to anticipate the DR event and increase her base load, i.e., the load before the demand response event began. This pre-increase allows the customer to reduce the load during the DR event by a larger amount than would have been possible in the absence of such an increase; thus, receiving a larger payment even though the DRA accrues the benefit of only a smaller true load reduction. The central problem considered in this paper is to design a contract that is free from both these problems.

\begin{remark} It is worth pointing out that the falsification of the load reduction claimed by the  customer may happen even though the load at the customer is being monitored constantly and accurately. Further, the DRA can not find the `true' base load by considering the load used by a customer at some arbitrary time before the DR event. For one, this simply shifts the problem of customer manipulation of the load to an earlier time. Second, some of the increase in the base load may be due to true shifts in customer need due to, e.g., increased temperature.
\end{remark}

\subsection{Timeline}
\begin{figure}[tb]
\centering 
\includegraphics[width=9cm, height=4.5cm]{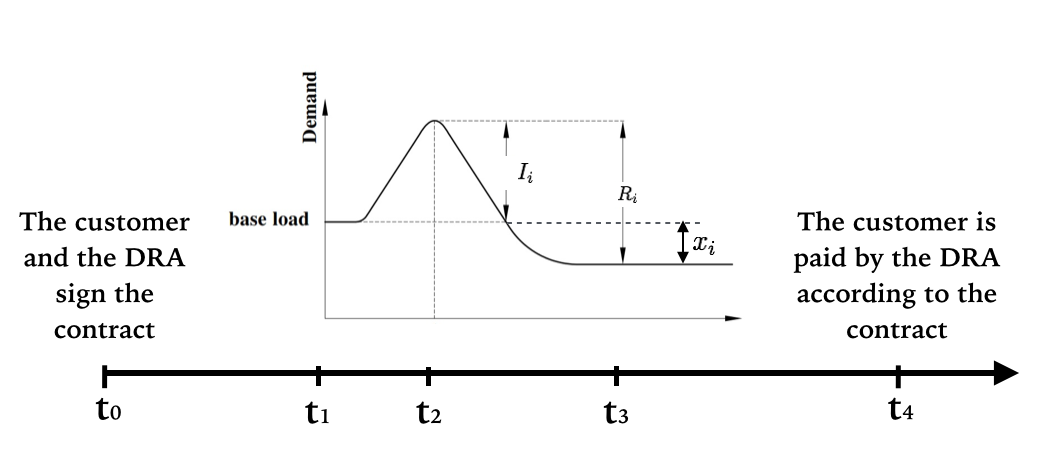}
\caption{Timeline of the DR event and the proposed contract.}
\label{pic_2}
\end{figure}
Consider $N$ customers denoted by $i=1, \cdots , N$ that are contracted with a DRA. We refer to the timeline shown in Figure \ref{pic_2} to explain the sequence of events. At time $t_1$, strategic customers anticipate that a DR event is likely to begin at time $t_{2}$. Accordingly, at this time, each customer $i$ calculates the effort $a_{i}$ she is willing to put in for the load reduction during the DR event. We assume that this effort costs the customer $h(a_i)$. Further, this effort leads to a reduction in the load by an amount $x_i$ that can depend on local conditions that are private knowledge for the $i$-th customer. For instance, a factory might be able to induce a large load reduction with a small effort based on its assembly line requirements given the orders it has to fulfill.  
The DRA is assumed to know the probability density function $f(x_i|a_i)$ according to which $x_i$ is realized, while the customer knows the local conditions and can calculate the value of $x_i$ that will be realized. After this calculation, each customer $i$ at time $t_{1}$ may increase (or decrease) the load by an amount $I_i$ in anticipation of the DR event. 

\begin{asum}
\label{assum11}
The random variables describing the load reductions are conditionally independent given the actions taken by all the customers, so that 
\[
f(x_1,x_2,\cdots, x_N|a_1,a_2, \cdots, a_N)=f(x_1|a_1)f(x_2|a_2)\cdots f(x_N|a_N).\]
\end{asum}
\begin{asum}
\label{asum3}
For ease of computation, we assume that $x_i$ is a noisy signal of the action, i.e.,
  \begin{equation}
  x_i=a_i+e_i,
  \label{ae}
  \end{equation}
where the random variables $\{e_i\}$ are i.i.d  with mean $m_e$ and variance  $\sigma^2$. Assumption~\ref{assum11} can thus be stated as
\[
f(e_1,e_2,\cdots, e_N)=f(e_1)f(e_2)\cdots f(e_N).\]
We first consider the case when $m_e=0$  and extend  the results  to the case when $m_e\neq0$  in Section \ref{ex1}. 
\end{asum}

At time $t_2$, the DR event begins and the DRA calls on the customers to decrease their loads. Each customer $i$ now makes the  predetermined effort $a_i$ leading to a reduction of her load by $x_i$. The DR  event ends at $t_3$ with each customer $i$ having reported that she decreased the load by an amount $R_i$. Note that the true reduction in the load for the $i$-th customer is  $x_i= R_i-I_i$, while the false report\footnote{We wish to emphasize again that the load at the customer  is being accurately monitored at all times.} is $R_i$. 

We also show the times $t_0$ and $t_4$ in the timeline in Figure \ref{pic_2}. At time $t_0$ (much before $t_{1}$), the contract specifying the payment structure is signed between the DRA and the customers. We assume that $t_{0}$ is sufficiently early, so that at $t_{0}$, the customers too do not know the local conditions and must consider their expected utility according to the probability density functions $f(x_i|a_i)$ (or, equivalently, $f(e_i)$). At time $t_4$, at least a part of the payment $P_i$ as specified by the contract to the customers is paid by the DRA to incentivize them to participate in the DR event. The time $t_{4}$ is sufficiently close to the DR event, so that the realized value of $x_i$ is not known at time  $t_4$ to the DRA.  The contract may specify that the rest of the payment is done at some later time $t_{5}$, when the DRA may have more knowledge of the true value of $x_i$. 

\subsection{Utility Functions}
The effort cost suffered by the $i$-th customer for an effort $a_i$ is given by a function $h(a_{i})$ that is known to all the customers and the DRA. Further, for a true reduction $x_i$, if the customer manipulates her base load and reports the reduction to be $R_{i},$ she suffers a falsification cost $g_{i}(R_i-x_i)$. This can model, e.g.,  any extra payment by the customer for boosting her consumption as she manipulated the load prior to the DR event. For simplicity, we assume that $g_{i}(R_{i}-x_{i}) = \beta_{i}\frac{(R_i-x_i)^2}{2},$ $\forall i,$ where $\beta_{i}>0$. Thus, with a payment $P_i$, the utility of the $i$-th customer is given by 
\[
V_i=-h(a_i)-\beta_{i}\frac{(R_i-x_i)^2}{2}+P_i,\qquad i=1,\cdots, N.
\]

The utility of the DRA is given by its net profit, which is the difference of the gross profit that occurs due to the load reduction by the customers and the payments made to the customers as part of the contract. For simplicity, we assume that the gross profit made due to reduction of load $x_i$ is equal to  $x_i$; more complicated cases can be easily considered. With a total payment $\sum_{i=1}^{N} P_i$, the utility of the DRA is given by 
\begin{equation*}
\Pi=\sum_{i=1}^{N} (x_i-P_i).
\end{equation*} 

\subsection{Problem Formulation}
We assume that the customers and the DRA are rational and risk neutral, so that they seek to maximize the expected value of their utility functions. The problem we consider in this paper is for the DRA to design a contract  that maximizes its own utility when rational customers choose \textit{actions} $\{a_i\}$ and \textit{reports} $\{R_i\}$ to optimize their own utility functions. Denote by $\mathcal X$ the set of random variables describing the actual load reductions generated by the customers, i.e,  $\mathcal X \triangleq\{X_1, \cdots,X_{N}\}$ and by $\mathcal{E}$ the set $\{E_1, \cdots,E_{N}\}$.  Further, denote by $\mathcal X_{-i}$ the set of random variables describing the load reductions of all customers except $i$, i.e, $\mathcal X_{-i}\triangleq \mathcal X\backslash \{X_i\}$ and by $\mathcal{E}_{-i}$ the set $\mathcal{E}_\mathcal E \backslash \{E_i\}$. Thus, the optimization problem $\mathcal{P}_{1}$ to be solved by DRA is given by
 \begin{equation*}
\textrm{$\mathcal{P}_{1}$:}
\begin{cases}
&\underset{\{P_i\}}\max  \E_\mathcal E[\Pi]\\
s.t. &\textrm{$\{a_{i}, R_{i}\}$ is chosen to maximize $\E_\mathcal E[V_i]$ by each}\\ &\textrm{ customer $i$}\\
&\textrm{individual rationality and incentive}\\&\textrm{ compatibility constraints for all the customers}.
\end{cases}
\end{equation*}
 
As stated in problem $\mathcal{P}_1$, we impose two constraints on the contract.
\paragraph{Individual rationality}
We assume that the DRA can not force customers to participate in the load reduction program due to political or social reasons. Instead, the contract should be individually rational so that a rational customer chooses to participate. We impose this constraint in the form of ex ante individual rationality.  This constraint requires that no customer chooses to walk away from the contract at time $t_0$ before she knows either  her own load saving or the savings of the other  customers; thus,  $\E_\mathcal E [V_i]\geq 0, \forall{i}.$
\paragraph{Incentive compatibility}
 Incentive Compatibility is a standard constraint imposed in mechanism design which is used to limit the space of the contracts we need to optimize over (see, e.g.,~\cite{myerson1979incentive}).   Specifically, this constraint implies that the utility of the consumers  does not increase if they calculate their report $R_i$ based on any arbitrary quantity other than the true value of their load reduction $x_i$. Further, this constraint also implies that without loss of generality, a customer with private information of load reduction  $x_i$ would always prefer the payment $P_{i}(x_{i})$ over the alternatives $P_{i}(\hat{x}_i)$ for any $\hat{x_i}\neq x_i$. 
 
 We make the following further assumptions: 
 \begin{asum}
 \label{asum4}
 \begin{enumerate}[label=(\roman*)]
\item \textbf{(Deterministic Policies)} The customers choose effort $a_i$ according to deterministic policies. Stochastic  policies would imply additional stochasticity in $\mathcal{P}_{1}$ that we do not consider in this paper.

\item \textbf{(Communication Structure)} Individual customers cannot  communicate with each other, so that the load reduction $R_i$ claimed by the $i$-th customer as well as the true profit $x_i$ and hidden action $a_i$ for this customer are not known to the other customers. The DRA does not have access to $x_i$ and $a_i$ till possibly at a much later time $t_{5}\gg t_{4}.$ 

\item \textbf{(Public Knowledge of Functional Forms)} The functional forms of $h_i$, $g_i$,  the probability distribution functions  $\{f(e_i)\}$,  the weights $\{\beta_i\}$, and the contracts offered  are known to all the customers and the DRA.
\end{enumerate}
\end{asum}

We now proceed to present our solution to the problem $\mathcal{P}_{1}$.

\section{Structure of the Proposed Contract}
\label{sec4}
In this section, we present a contract as a solution of the problem $\mathcal{P}_{1}$. To this end, we begin by discussing why some intuitive contracts may fail.

\subsection{Some Intuitive Contracts}
For simplicity, in this section, we restrict our attention to the scenario when only one customer is present.  For notational ease, when $N=1$, we drop the subscript $i$ referring to the $i$-th customer. 
\begin{example}
Consider a contract that provides a constant payment $c$ to the customer for decreasing her load. Then, the utility function of the customer  is given by:
\begin{displaymath}
V = cu(R)-\beta \frac{(R-x)^2}{2}-h(a),
\end{displaymath}
where $u(.)$ is the unit step function. In this case, the customer seeking to maximize her utility, will choose $a=0$ (i.e., no action) but $R=0^{+}$ (i.e., minimal load reduction reported irrespective of true value of $x$), independently of the value of $c$. The  utility function of the DRA  
is given by
\begin{displaymath}
\Pi=x-cu(R).
\end{displaymath}
Thus, if zero action leads to zero true load reduction, the DRA ends up making a payment in spite of not achieving any load reduction. Thus, this contract is unsuitable for the DRA.
\end{example}
The contract proposed in Example $1$ fails because it does not account for the fact that the amount of effort is known only to the customer and not the DRA. Since the effort is costly, this generates the problem of moral hazard~\cite[Chapter~4]{laf}. To induce a positive load reduction in spite of the presence of moral hazard, the contract must make at least part of the payment proportional to the amount of the load reduction. Otherwise, as discussed above, a rational customer will not choose any non-zero effort.

\begin{example}
Consider a contract in which the DRA provides an incentive $cR$ to the customer in response to the reported reduction $R$ at time $t_{4}$. Then, the utility function of the customer  is given by:
\begin{displaymath}
V = cR-\beta \frac{(R-x)^2}{2}-h(a),
\end{displaymath}
while the utility function of the DRA  
is given by
\begin{displaymath}
\Pi=x-cR.
\end{displaymath}
Especially if  $\beta$ is small, this contract would result in the customer choosing $a=0$ and misreporting a large $R$ to maximize $\E[V]$. Once again, the contract will be unsuitable for the DRA. 
\end{example}
The reason the contract in Example $2$ fails is that the DRA does not have access to the true load reduction $x$ at $t_{4}$ when it has to make at least part of the payment. This creates the problem of {\em adverse selection}~\cite[Chapter~3]{laf}. If the DRA relies on the reported value $R$ for the payment, this creates an incentive for the customer to misreport $R$ as high as possible to gain maximal payment (modulo the falsification cost). 
\begin{remark}
\label{remark22}
If the problem is one that displays only one of  moral hazard or  adverse selection, optimal contracts can be designed using standard methods from the literature. 
However, such contracts are unsuitable  for the problem $\mathcal{P}_{1}$ since we face the problem of moral hazard followed by adverse selection.
\end{remark}

 We conclude this discussion with the following result.


\begin{theorem}
\label{lem0}
Assume that the DRA has accurate knowledge of the true load reduction $x_i$ at time $t_4$.   
 \begin{itemize}
 \item The level of effort $a_i$ by the $i$-th customer which maximizes the utility of the DRA is given by
 \[
 a_i^{\star}=\argmax\E[x_i-h(a_i)].
\]
\item  The DRA can ensure that the effort $a_i^{\star}$ is expended by the each customer $i$ by offering a contract that specifies payments of the form
\begin{equation}
P_i=\E[ x_i-  a_i^{\star}+h(a_{i}^{\star})].
\label{puremhz}
\end{equation}
 \end{itemize}
\end{theorem}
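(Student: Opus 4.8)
The hypothesis that the DRA observes the true reduction $x_i$ by time $t_4$ is what makes this Lemma tractable: it removes the adverse-selection layer and turns $\mathcal P_1$ into a textbook moral-hazard problem of the ``sell-the-surplus'' kind. I would organize the argument in two halves. First I would derive an \emph{upper bound} on the DRA's attainable expected utility that holds for \emph{every} admissible contract and every resulting behaviour of the customers; this bound is exactly $\sum_i \E[x_i-h(a_i^\star)]$ and thus simultaneously pins down $a_i^\star$ as the effort the DRA wants to elicit, proving the first bullet. Second I would verify that the payment in \eqref{puremhz}, read as the outcome-contingent transfer $P_i = x_i - a_i^\star + h(a_i^\star)$ (whose expectation, once $a_i^\star$ is played, equals $\E[x_i - a_i^\star + h(a_i^\star)]=h(a_i^\star)$), is incentive compatible, individually rational, and meets the bound, hence is optimal and induces $a_i^\star$, proving the second bullet.

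\textbf{The bound.} I would fix any contract $\{P_i\}$ and any effort/report profile $\{a_i,R_i\}$ chosen in response (deterministic efforts, by Assumption~\ref{asum4}). Then $\E_\mathcal E[\Pi]=\sum_{i=1}^N \E[x_i-P_i]$. Ex ante individual rationality $\E[V_i]\ge 0$, together with the nonnegativity of $h$ and of $g_i(R_i-x_i)=\beta_i(R_i-x_i)^2/2$, gives $\E[P_i]\ge \E[h(a_i)]+\E[\beta_i(R_i-x_i)^2/2]\ge \E[h(a_i)]$, so $\E[x_i-P_i]\le \E[x_i-h(a_i)]$. By Assumption~\ref{asum3} with $m_e=0$ we have $\E[x_i\mid a_i]=a_i$, so $\E[x_i-h(a_i)]=a_i-h(a_i)$, which by the definition of $a_i^\star$ is at most $a_i^\star-h(a_i^\star)=\E[x_i-h(a_i^\star)]$. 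Summing over $i$ yields $\E_\mathcal E[\Pi]\le \sum_i \E[x_i-h(a_i^\star)]$ for every contract, so the DRA's utility is maximized precisely when the customers are induced to exert $\{a_i^\star\}$.

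\textbf{Attaining the bound.} I would substitute $P_i=x_i-a_i^\star+h(a_i^\star)$ into $V_i=-h(a_i)-\beta_i(R_i-x_i)^2/2+P_i$ and note that the payment depends on customer $i$'s data only through the realized $x_i$, not through $R_i$ and not through $\mathcal X_{-i}$; hence each customer's best response is $R_i=x_i$ (eliminating the falsification cost), the cross-customer coupling vanishes, and the problem decouples into $N$ independent single-customer problems. Then $\E[V_i]=-h(a_i)+\E[x_i]-a_i^\star+h(a_i^\star)=(a_i-h(a_i))-(a_i^\star-h(a_i^\star))$, which over deterministic efforts is maximized at $a_i=a_i^\star$; thus incentive compatibility selects $a_i^\star$, and there $\E[V_i]=0$, so individual rationality binds. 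With the profile $\{a_i^\star,\,R_i=x_i\}$ we get $x_i-P_i=a_i^\star-h(a_i^\star)$ deterministically, so $\E_\mathcal E[\Pi]=\sum_i(a_i^\star-h(a_i^\star))=\sum_i\E[x_i-h(a_i^\star)]$, which matches the upper bound; hence the contract is optimal and the Lemma follows.

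\textbf{Main obstacle.} The delicate step — and the one I would write most carefully — is the reduction ``$x_i$ observed $\Rightarrow$ the report is payoff-irrelevant $\Rightarrow$ the customer reports truthfully.'' For the \emph{proposed} contract this is immediate from its functional form, but I would be careful that the \emph{upper-bound} step does not implicitly assume a contract cannot reward $R_i\neq x_i$: it does not, since there I only use the inequalities $h\ge 0$ and $\beta_i(R_i-x_i)^2/2\ge 0$ inside $\E[V_i]\ge 0$, which hold for any $R_i$. A minor technical point I would flag as a standing assumption rather than prove is the existence (and, if wanted, uniqueness) of the maximizer $a_i^\star$ of $a\mapsto a-h(a)$, which holds, e.g., when $h$ is differentiable and strictly convex with slope eventually exceeding $1$.
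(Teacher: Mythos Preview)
Your proof is correct and follows essentially the same ``sell-the-firm'' route as the paper: both identify $a_i^\star=\argmax_a\,\E[x_i-h(a_i)]$ as the first-best effort, exhibit the outcome-contingent transfer $P_i=x_i-a_i^\star+h(a_i^\star)$, and verify that it makes the customer's objective coincide with the DRA's surplus so that $a_i^\star$ is chosen and IR binds. Your framing is slightly tighter than the paper's---you go directly to an upper bound via IR and then attain it, whereas the paper first argues informally via the observable-action benchmark $P_i=h(a_i)$ before switching to the $x_i$-based contract---and you are more explicit about why the report $R_i$ drops out, a point the paper's proof glosses over; one cosmetic quibble is that in your bound step only the nonnegativity of the falsification cost is needed, not of $h$.
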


\begin{proof}
See Appendix.
\end{proof}

%

Next, we propose a contract structure for the problem $\mathcal{P}_{1}$ using a two-part payment structure.

\subsection{Proposed Contract Structure}
The contract that the DRA offers to the customers should at once incentivize them to put in costly effort and to report the  load reduction truthfully.  We propose a contract in which the payment to the $i$-th customer is given  by a pair of the form $\{B_{i}(R_i), \alpha_i\}$, where 
\begin{itemize}
\item $B_{i}(R_i)$ is a bonus which is rewarded to the $i$-th customer at $t_{4}$ after the customer reports  load reduction $R_i$, and 
\item $\alpha_i$ is the share of its own gross profit that the DRA realizes due to the demand reduction by the customers and pays back to the $i$-th customer at a much later time $t_{5}\gg t_{4}$. 
\end{itemize}
Note that the payment of the share supposes that the DRA knows the profit it obtains as a result of the load reductions by the customers at time $t_5$.
We first consider the case when this profit is known to the DRA perfectly. We then extend the results to the case  when the gross profit can only be estimated (possibly with some error) in Section \ref{ex2}.

The proposed  contract results in the payment function for the $i$-th customer as given by
\begin{equation}
 P_i = \alpha_i x_i+B_{i}(R_i).
 \label{c1}
\end{equation}
Further, the utility function of the customer  can be written as 
\begin{equation}
V_i=\alpha_i x_i+B_{i}(R_i)-h(a_i)-\beta_{i}\frac{(R_i-x_i)^2}{2},
\label{v1}
\end{equation}
 while the  utility  function for  the DRA is given by
 \begin{equation}
 \Pi =  \sum _{i=1}^{N}(x_i-P_i)=\sum _{i=1}^{N}(1-\alpha_i)x_i- \sum _{i=1}^{N}B_{i}(R_i).
 \label{v2}
 \end{equation} 
By invoking the revelation principle~\cite{laf}, without loss of optimality, we restrict attention to direct mechanisms (where $\hat{x_i}=x_i$) that are incentive compatible. Further, to emphasize the dependence of the utilities on the bonus function and the share, we will sometimes write $V_{i}$ as $V_{i}(B_{i}(R_{i}),\alpha_{i})$ and $\Pi$ as $\Pi(\{B_{i}(R_{i})\},\{\alpha_{i}\}).$

Finally, to ensure that the problem $\mathcal{P}_{1}$ is non-trivial with this contract, we will impose the following further constraints on the problem.
\begin{asum}
\label{asum1}
The DRA does not provide all the profit back to the customers, i.e., $\sum _{i=1}^{N} \alpha_i < 1$. 
\end{asum}
\begin{asum}
\label{asum2}
The bonus is always positive, i.e., $B_{i}(R_i)\geq0$. $B_{i}(R_i)<0$ will imply that the DRA can fine the customers which we disallow in keeping with the individual rationality constraints. We will also assume that $B_{i}(R_i)$ is twice differentiable  and concave in $R_i$  and  further that $B_i(R_i)$  is designed such that $\E_{\mathcal{E}}\left[V_i(B(R^*_i), \alpha_i)\right]$ is concave in $a_i$, where $R^*_i$ is the optimal report by the $i$-th customer as a function of her load reduction.
\end{asum}

\section{Design of the Contract}
\label{sec3}
In this section, we solve for the optimal contract by solving the problem $\mathcal{P}_{1}$. We begin by exploring the design space in terms of identifying the properties that any contract should satisfy. 

\subsection{An Impossibility Result}
The first question that arises is if we can design any contract that incentivize the consumers not to misreport and set  $R_i=x_i$ for all $i$. While naive applications of the revelation principle may suggest that such contracts are not only possible, but that limiting our consideration to such contracts is without loss of generality, this is not the case if the revelation principle is interpreted properly in our context.  Note that the if  a contract that ensures $R_i=x_i$ were possible, Lemma  \ref{lem0} states that the optimal efforts as desired by the DRA are given by  $\{a^{\star}_i\}_{i=1}^{N}$.




\begin{thm}
\label{thfirst}
Under Assumptions~\ref{asum1} and \ref{asum2}, there exists no contract for problem $\mathcal{P}_{1}$ which simultaneously   guarantees  elicitation of the truth from the  customers (in the sense that $R_i=x_i$)  and the realization of the efforts  $\{a_i^{\star}\}$.\end{thm}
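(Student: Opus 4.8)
The plan is to argue by contradiction. Suppose a contract $\{B_i(\cdot),\alpha_i\}_{i=1}^N$ exists that both elicits the truth ($R_i=x_i$) and induces every customer to exert $a_i^\star$. I will show that this forces $\alpha_i=1$ for each $i$, which is incompatible with Assumption~\ref{asum1}. The entire argument is carried out customer by customer, so the coupling among customers introduced by the profit share never enters.

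First I would analyze the reporting stage. Given a realized reduction $x_i$, customer $i$ chooses $R_i$ to maximize $B_i(R_i)-\tfrac{\beta_i}{2}(R_i-x_i)^2$, the remaining terms $\alpha_i x_i-h(a_i)$ of $V_i$ being independent of $R_i$. By Assumption~\ref{asum2} the function $B_i$ is twice differentiable and concave, so this objective is strictly concave with a unique interior maximizer $R_i^\star(x_i)$ characterized by $B_i'(R_i^\star)=\beta_i(R_i^\star-x_i)$. Demanding truthful revelation, $R_i^\star(x_i)=x_i$, therefore forces $B_i'(x_i)=0$ on the support of $X_i=a_i^\star+E_i$; under the Gaussian model of Assumption~\ref{asum3} this makes $B_i$ constant.

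Next I would analyze the effort stage. Anticipating the reporting rule $R_i^\star(\cdot)$, customer $i$ picks $a_i$ to maximize $U_i(a_i):=\E_{E_i}\big[\alpha_i(a_i+E_i)+B_i(R_i^\star(a_i+E_i))-h(a_i)-\tfrac{\beta_i}{2}(R_i^\star(a_i+E_i)-a_i-E_i)^2\big]$. Differentiating and substituting the reporting first-order condition $B_i'(R_i^\star)=\beta_i(R_i^\star-x_i)$ (an envelope cancellation), all terms involving $\tfrac{d}{da_i}R_i^\star$ drop out, leaving $U_i'(a_i)=\alpha_i-h'(a_i)+\beta_i\,\E[R_i^\star(a_i+E_i)-a_i-E_i]$. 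Evaluating at the required choice $a_i=a_i^\star$, truthfulness gives $R_i^\star(a_i^\star+E_i)=a_i^\star+E_i$ almost surely, so the expectation vanishes and stationarity of the (concave, by Assumption~\ref{asum2}) objective yields $\alpha_i=h'(a_i^\star)$. Alternatively, since $B_i$ is constant by the previous step, this follows immediately from maximizing $\alpha_i a_i-h(a_i)$ over $a_i$.

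Finally, by Lemma~\ref{lem0} together with $m_e=0$ (so that $\E[x_i]=a_i$), we have $a_i^\star=\argmax_{a_i}(a_i-h(a_i))$, hence $h'(a_i^\star)=1$. Combining, $\alpha_i=1$ for every $i=1,\dots,N$, so $\sum_{i=1}^N\alpha_i=N\ge 1$, contradicting Assumption~\ref{asum1}. I expect the one genuinely delicate point to be the effort-stage envelope argument — specifically, checking that $R_i^\star$ is a smooth interior selection and that $a_i^\star$ is an interior stationary point of a concave map — which is exactly what the twice-differentiability and concavity content of Assumption~\ref{asum2} and the smoothness of $h$ are there to supply; once that is in place the contradiction is immediate.
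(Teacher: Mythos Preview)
Your argument is correct and shares the paper's overall strategy: argue by contradiction, use the reporting first-order condition at $R_i=x_i$ to force $B_i$ constant, and then extract a violation of Assumption~\ref{asum1} from the effort stage. The difference is only in that last step. The paper, having established $B_i\equiv c$, invokes Lemma~\ref{lem0} to assert that the payment must take the form $x_i-a_i^\star+h(a_i^\star)$ and then matches coefficients against $\alpha_i x_i+c$ to read off $\sum_i\alpha_i=1$ (and $c<0$, simultaneously hitting Assumption~\ref{asum2}). Your route is more elementary and arguably tighter: once $B_i$ is constant the customer simply maximizes $\alpha_i a_i-h(a_i)$, so requiring her optimum to be $a_i^\star$ yields $\alpha_i=h'(a_i^\star)=1$ directly from her own first-order condition, without appealing to Lemma~\ref{lem0} and without the implicit claim that the payment in Lemma~\ref{lem0} is the \emph{only} one inducing $a_i^\star$. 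Two minor cleanups: Assumption~\ref{asum3} does not stipulate Gaussian noise, so to pass from $B_i'\equiv 0$ on the support of $X_i$ to $B_i$ constant you only need (and should state) that the support of $E_i$ is an interval; and your envelope computation at the effort stage, while correct, is redundant once $B_i$ is known to be constant, as you yourself note.
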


\begin{proof}
See Appendix.
\end{proof}

\subsection{ Contract Design}

We now design the payment schemes for the contract  to solve problem $\mathcal{P}_{1}$. 
We solve problem $\mathcal{P}_{1}$ in three steps:
\begin{enumerate}
\item First, we characterize what the optimal value of the load reduction  claimed by each customer would be for a given contract $(\alpha_i, B_{i}(R_i))$. Thus,  we find the optimal value $R_i^*$  of $R_i$, as the solution of the problem
\begin{align}
R_i^*
&= \argmax_{R_{i}} \E_{\mathcal{E}_{-i}}[V(B_{i}(R_i), \alpha_i )].
\label{Rstar}
\end{align}
\item Then, for this value  $R_i^*$,  we calculate the optimal effort $a_i^*$ exerted by the customers, i.e.,  we solve the problem 
\begin{equation}
a_i^*= \argmax_{a_{i}} \E_{\mathcal{E}}[V(B_{i}(R_i^*), \alpha_i )].
\label{effstar}
\end{equation}
\item Finally, having characterized the response of the customers, we optimize the parameters of the proposed contract for the DRA. Thus, we solve  \begin{equation}
 \{B^*(.),\alpha^*_i\} = \argmax_{\{B_{i}(.)\},\{\alpha_{i}\}} \E_{\mathcal{E}}[\Pi(\{B_{i}(R_i^*), \{\alpha_{i}\})],
 \label{optpay}\end{equation}
\end{enumerate}
when the customers exert the efforts $\{a_i^{*}\}$ and report reductions $\{R_i^{*}\}$. We continue with the following result on the first step.
\begin{thm}
\label{pr0}
Consider the optimization problem $\mathcal{P}_1$. The optimal choice of the reported load reduction $R_i$ obtained as a solution to the problem  \eqref{Rstar} is given by the solution to the following equation
\begin{equation}
R^*_i-x_i=\frac{1}{\beta_{i}}\frac{\partial \E_{\mathcal{E}_{-i}}[B_{i}(R_i)]}{ \partial R_i}\bigg\rvert_{R_{i}=R^*_{i}}.
\label{Rstar2}
\end{equation}
\label{pro3}
\end{thm}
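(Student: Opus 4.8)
The statement is just the first-order optimality condition for the innermost of the three problems, namely \eqref{Rstar}, in which the $i$-th customer chooses her report $R_i$ with her effort $a_i$ and her realized load reduction $x_i$ already fixed. My plan is: (i) isolate the part of $\E_{\mathcal{E}_{-i}}[V_i]$ that depends on $R_i$; (ii) differentiate, set the derivative to zero, and rearrange into \eqref{Rstar2}; (iii) invoke the concavity hypotheses to argue that this stationarity condition is both necessary and sufficient for a global maximum and that a maximizer exists.

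For step (i), from \eqref{v1} we have $V_i(B_i(R_i),\alpha_i)=\alpha_i x_i+B_i(R_i)-h(a_i)-\beta_i(R_i-x_i)^2/2$. Holding $a_i$ and the realized $x_i$ fixed, the terms $\alpha_i x_i$ and $h(a_i)$ are constants in $R_i$, so maximizing $\E_{\mathcal{E}_{-i}}[V_i]$ over $R_i$ is the same as maximizing $\E_{\mathcal{E}_{-i}}[B_i(R_i)]-\beta_i(R_i-x_i)^2/2$. Since the bonus $B_i$ is a function of the $i$-th customer's own report only, the expectation over $\mathcal{E}_{-i}$ is in fact vacuous at this stage; I retain it so that the statement reads uniformly with the more general bonus structures considered in Section~\ref{ex2}.

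For step (ii), Assumption~\ref{asum2} gives that $B_i$ is twice differentiable, and the noise density (Gaussian, by Assumption~\ref{asum3}) is regular enough to justify differentiating under the expectation. Stationarity in $R_i$ then reads
\[
\frac{\partial \E_{\mathcal{E}_{-i}}[B_i(R_i)]}{\partial R_i}-\beta_i(R_i-x_i)=0,
\]
which is exactly \eqref{Rstar2} after dividing by $\beta_i>0$. For step (iii), a second differentiation gives the objective's second derivative as $\partial^2 \E_{\mathcal{E}_{-i}}[B_i(R_i)]/\partial R_i^2-\beta_i$, which is strictly negative because concavity of $B_i$ forces its second derivative to be nonpositive while $\beta_i>0$. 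Hence the objective is strictly concave in $R_i$, so its stationary point is the unique global maximizer; existence follows from coercivity, since the penalty $-\beta_i(R_i-x_i)^2/2$ drives the objective to $-\infty$ as $|R_i|\to\infty$ (a concave $B_i$ grows at most linearly). Thus $R_i^*$ exists, is unique, and is characterized by \eqref{Rstar2}.

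The computation is routine; the only points needing a word of justification are the interchange of $\partial_{R_i}$ and $\E_{\mathcal{E}_{-i}}$ (immediate from the smoothness of $B_i$ and the Gaussian noise) and the upgrade from a necessary first-order condition to a sufficient one, which rests entirely on the strict concavity contributed by the falsification-cost term $-\beta_i(R_i-x_i)^2/2$. If one restricts $B_i$ to the domain $R_i\ge 0$, one should additionally observe that the maximizer is interior --- which holds as soon as $B_i'(0)\ge 0$, i.e., reporting a marginally positive reduction is weakly rewarded --- so that no boundary term enters \eqref{Rstar2}.
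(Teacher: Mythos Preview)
Your proposal is correct and follows essentially the same route as the paper: write out $\E_{\mathcal{E}_{-i}}[V_i]$, differentiate in $R_i$, set the derivative to zero, and verify the second-order condition via the concavity of $B_i$ together with $\beta_i>0$. Your version adds a few niceties the paper omits (coercivity for existence, the remark that the $\mathcal{E}_{-i}$-expectation is vacuous when $B_i$ depends on $R_i$ alone, and the boundary caveat), but the core argument is identical; one small slip is that Assumption~\ref{asum3} does not actually stipulate Gaussian noise, though this does not affect your proof.
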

\begin{proof}
See Appendix.
\end{proof}

 This result characterizes the optimal reporting by the customer. We note the following interesting feature.
\begin{cor}
With the payment scheme $P_i{=} \alpha_i x_i+B_{i}(R_i)$  in problem $\mathcal{P}_{1}$, if  $B_{i}(R_i)$ is a decreasing (respectively increasing) in $R_i$, then the customer underreports (respectively overreports) her true load reduction.\label{corr1}
\end{cor}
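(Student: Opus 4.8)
The plan is to read off the corollary directly from the characterization of the optimal report given in Lemma~\ref{pr0}. That lemma states that the optimal report $R_i^*$ satisfies
\begin{equation*}
R_i^* - x_i = \frac{1}{\beta_i}\frac{\partial \E_{\mathcal{E}_{-i}}[B_i(R_i)]}{\partial R_i}\bigg\rvert_{R_i = R_i^*}.
\end{equation*}
Since $\beta_i > 0$, the sign of the reporting bias $R_i^* - x_i$ is exactly the sign of $\partial \E_{\mathcal{E}_{-i}}[B_i(R_i)]/\partial R_i$ evaluated at $R_i = R_i^*$. So the entire argument reduces to relating the monotonicity of $B_i$ to the sign of the derivative of $\E_{\mathcal{E}_{-i}}[B_i(R_i)]$.

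First I would note that $B_i(R_i)$ as written in the contract is a function of the single scalar $R_i$; the expectation $\E_{\mathcal{E}_{-i}}$ over the other customers' noise variables acts on quantities that, through the coupling in the DRA's profit share, may enter the customer's problem — but the bonus term $B_i(R_i)$ itself depends only on $R_i$, so $\E_{\mathcal{E}_{-i}}[B_i(R_i)] = B_i(R_i)$ and $\partial \E_{\mathcal{E}_{-i}}[B_i(R_i)]/\partial R_i = B_i'(R_i)$. (If one instead wants to keep the expectation for uniformity with the statement of Lemma~\ref{pr0}, the key point is simply that differentiation and the expectation over $\mathcal{E}_{-i}$ commute — justified by dominated convergence using that $B_i$ is twice differentiable and concave per Assumption~\ref{asum2} — and that expectation preserves sign, so $\E_{\mathcal{E}_{-i}}[B_i'(R_i)]$ has the same sign as $B_i'$ whenever $B_i'$ has a constant sign.) Then: if $B_i$ is decreasing in $R_i$, $B_i'(R_i) \le 0$, hence $R_i^* - x_i \le 0$, i.e., the customer underreports; if $B_i$ is increasing in $R_i$, $B_i'(R_i) \ge 0$, hence $R_i^* - x_i \ge 0$, i.e., the customer overreports. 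This gives the claim.

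The main thing to be careful about — the only real ``obstacle'' — is that \eqref{Rstar2} is an implicit (fixed-point) equation for $R_i^*$: the derivative is evaluated at the optimizer itself, not at a fixed point. So I would make explicit that a solution $R_i^*$ exists and is the maximizer (this follows from the concavity of $\E_{\mathcal{E}}[V_i]$ assumed in Assumption~\ref{asum2}, together with $B_i$ concave), and that at this $R_i^*$ the sign relation above holds pointwise. One should also observe the degenerate boundary case: when $B_i$ is (weakly) monotone but flat on a region, $B_i' = 0$ there and $R_i^* = x_i$, which is consistent with reading ``underreports/overreports'' in the weak sense; I would state the corollary's conclusion as weak (under-/over-reporting meaning $R_i^* \le x_i$ / $R_i^* \ge x_i$) or note that the inequalities are strict whenever $B_i' \ne 0$ at $R_i^*$. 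No heavy computation is needed beyond invoking Lemma~\ref{pr0}; the proof is essentially a one-line sign-chase once that lemma is in hand.
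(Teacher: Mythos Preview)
Your proposal is correct and takes essentially the same approach as the paper: the paper's proof is simply ``follows directly from \eqref{Rstar2},'' and your argument is precisely the sign-chase on \eqref{Rstar2} using $\beta_i>0$. The additional remarks you make about the expectation commuting, existence of the maximizer, and the weak-versus-strict reading are sound elaborations but go beyond what the paper itself spells out.
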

 \begin{proof}
 The proof follows directly  from \eqref{Rstar2}. 
 \end{proof}
 
 \begin{remark}
Since $B_{i}(R_{i})$ may be a decreasing and an increasing function for different values of $R_i$, the optimal contract may induce both \textit{under-reporting} and  \textit{over-reporting} of the load reduction by the customer. In other words, for some values of the true load reduction, it is possible that  a strategic customer may decrease her base load before the DR event and under-report her power reduction to maximize her profit.  
 \end{remark}

Next, we characterize the optimal effort by solving the problem \eqref{effstar}.
\begin{thm}
\label{pro22}
Consider the optimization problem $\mathcal{P}_1$. The  optimal choice of the effort $a_i$ is obtained as the solution of  the equation
\begin{equation}
a_i^*=\alpha_i+\frac{\partial \E_{\mathcal{E}}\left[B_{i}(R^*_i)-\frac{1}{2\beta_i}\left(\frac{\partial B_{i}(R_i)}{\partial R_i}\big\rvert_{R_{i}=R_{i}^{*}}\right)^2\right]}{\partial a_i}\Bigg\rvert_{a_{i}=a_{i}^{*}},
\label{effortstar}
\end{equation}
where $R_{i}^{*}$ is as specified in Theorem~\ref{pr0}. 
\end{thm}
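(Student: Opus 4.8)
The plan is to reduce the customer's problem \eqref{effstar} to a one-dimensional concave maximization over $a_i$ by substituting the characterization of the optimal report from Theorem~\ref{pr0} into the utility \eqref{v1}. First I would invoke the first-order condition \eqref{Rstar2}, which, since $B_i$ depends on $R_i$ only, reads $\beta_i(R^*_i-x_i)=\frac{\partial B_i(R_i)}{\partial R_i}\big\rvert_{R_i=R^*_i}$. Squaring this identity collapses the falsification cost evaluated at the optimal report to
\[
\frac{\beta_i}{2}(R^*_i-x_i)^2=\frac{1}{2\beta_i}\left(\frac{\partial B_i(R_i)}{\partial R_i}\bigg\rvert_{R_i=R^*_i}\right)^{2},
\]
so that, writing $x_i=a_i+e_i$, the customer's realized utility is $V_i(B_i(R^*_i),\alpha_i)=\alpha_i x_i-h(a_i)+B_i(R^*_i)-\frac{1}{2\beta_i}\big(\partial_{R_i}B_i(R_i)|_{R^*_i}\big)^{2}$.

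Next I would take $\E_{\mathcal{E}}$ of this expression. By Theorem~\ref{pr0}, $R^*_i$ is a function of $x_i=a_i+e_i$ alone, so the expectation of the two $B_i$-terms depends only on the marginal law of $e_i$; this is precisely where the interaction among customers drops out of customer $i$'s effort choice. Since $m_e=0$ by Assumption~\ref{asum3}, $\E[x_i]=a_i$, and hence
\[
\E_{\mathcal{E}}[V_i(B_i(R^*_i),\alpha_i)]=\alpha_i a_i-h(a_i)+\E_{\mathcal{E}}\!\left[B_i(R^*_i)-\frac{1}{2\beta_i}\left(\frac{\partial B_i(R_i)}{\partial R_i}\bigg\rvert_{R_i=R^*_i}\right)^{2}\right].
\]
By Assumption~\ref{asum2} this objective is concave in $a_i$, so its maximizer $a^*_i$ is the unique stationary point; differentiating and setting the derivative to zero gives $h'(a^*_i)=\alpha_i+\frac{\partial}{\partial a_i}\E_{\mathcal{E}}[\,\cdot\,]\big\rvert_{a_i=a^*_i}$, which is \eqref{effortstar} for the quadratic effort cost $h(a_i)=a_i^2/2$ (and the obvious analogue for a general convex $h$, with $h'(a^*_i)$ on the left).

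The one point that needs care is the dependence of $R^*_i$ on $a_i$ through $x_i$ when differentiating the final expectation, and this is exactly what the implicit form $\frac{\partial}{\partial a_i}\E_{\mathcal{E}}[\cdot]$ in \eqref{effortstar} is meant to carry, so the substitution route above requires no extra bookkeeping. If one instead differentiates \eqref{v1} directly and only afterwards imposes \eqref{Rstar2}, the same equation follows by the envelope theorem: since $R^*_i$ maximizes $\E_{\mathcal{E}_{-i}}[V_i]$ over $R_i$ we have $\partial_{R_i}V_i|_{R_i=R^*_i}=0$, so the chain-rule contributions $\partial_{R_i}V_i\cdot\partial_{a_i}R^*_i$ vanish in expectation and $\partial_{a_i}\E_{\mathcal{E}}[V_i(B_i(R^*_i),\alpha_i)]=\E_{\mathcal{E}}[\partial_{a_i}V_i+\partial_{x_i}V_i]$; substituting $\partial_{x_i}V_i|_{R^*_i}=\alpha_i+\beta_i(R^*_i-x_i)=\alpha_i+\partial_{R_i}B_i|_{R^*_i}$ recovers \eqref{effortstar}. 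I would write out the substitution argument, since it is shorter and makes the structure of \eqref{effortstar} transparent; well-definedness and $C^1$ regularity of $R^*_i(x_i)$ follow routinely from Assumption~\ref{asum2} via the implicit function theorem, as $\partial_{R_i}^2 V_i=B_i''-\beta_i<0$.
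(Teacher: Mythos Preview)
Your proposal is correct and follows essentially the same approach as the paper: substitute the first-order condition \eqref{Rstar2} to rewrite the falsification cost as $\frac{1}{2\beta_i}(\partial_{R_i}B_i|_{R^*_i})^2$, use Assumption~\ref{asum3} to get $\E[x_i]=a_i$, and invoke the concavity in Assumption~\ref{asum2} so that the stationary point in $a_i$ is the maximizer. The only cosmetic difference is that the paper differentiates first and then substitutes \eqref{Rstar2}, whereas you substitute first and then differentiate; your added remarks on the envelope-theorem variant and on $C^1$ regularity of $R^*_i(x_i)$ via the implicit function theorem are sound but go slightly beyond what the paper records.
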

\begin{proof}
See Appendix.
\end{proof}

 Finally, having characterized the response of the customers, the third step is to optimize the parameters of the proposed contract by solving  \eqref{optpay}. 
 \begin{thm}
 \label{proposition_opt_contract}
Consider the problem formulation in Section \ref{sec1} and the optimization problem $\mathcal{P}_1$. The  optimal choice of the share  assigned to $i$-th customer and the optimal bonus function are defined implicitly through the equations 
\begin{align}
\label{optalfa}
\alpha_i^*&=1-\frac{a_i^*+\frac{\partial \E_{\mathcal{E}}[B_{i}(R_i)]}{\partial \alpha_i}\Big\rvert_{\alpha_{i}=\alpha_{i}^{*}}}{\frac{\partial a^*_i}{\partial \alpha_i}\Big\rvert_{\alpha_{i}=\alpha_{i}^{*}} }\\
 B^*(R_i^{*})&= \argmax\left[ (1-\alpha^*_i)a_i^*-\E_{\mathcal{E}}[B^*(R_i^*)]\right], 
 \label{bp3}
 \end{align}
  where $a_i^*$, and $R_i^*$ are evaluated using \eqref{Rstar2} and \eqref{effortstar}.
  \end{thm}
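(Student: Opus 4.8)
The plan is to follow the third step of the announced three-step programme: having pinned down the customers' best responses $R_i^*$ and $a_i^*$ in Theorems~\ref{pr0} and \ref{pro22}, we substitute them into the DRA's objective $\E_{\mathcal E}[\Pi(\{B_i(R_i^*)\},\{\alpha_i\})]$ and optimize over the contract parameters $\{\alpha_i\}$ and $\{B_i(\cdot)\}$. Using \eqref{v2}, the objective is $\sum_i\big((1-\alpha_i)\E_{\mathcal E}[x_i]-\E_{\mathcal E}[B_i(R_i^*)]\big)$, and since under Assumption~\ref{asum3} $\E_{\mathcal E}[x_i]=a_i^*$ (recall $m_e=0$, $x_i=a_i+e_i$), this is $\sum_i\big((1-\alpha_i)a_i^*-\E_{\mathcal E}[B_i(R_i^*)]\big)$. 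Because Assumption~\ref{assum11} makes the customers' problems separable and the DRA's objective additively separable across $i$, I would optimize term-by-term in $i$.

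First I would take the first-order condition in $\alpha_i$. Here the key subtlety — and the step I expect to be the main obstacle — is that $a_i^*$ is itself a function of $\alpha_i$ through the equilibrium condition \eqref{effortstar}, and $R_i^*$ depends on $\alpha_i$ as well (indirectly, through $a_i^*$, since by \eqref{Rstar2} the gap $R_i^*-x_i$ depends on $R_i^*$ and hence implicitly on the realized $x_i=a_i^*+e_i$). So differentiating $(1-\alpha_i)a_i^*(\alpha_i)-\E_{\mathcal E}[B_i(R_i^*(\alpha_i))]$ with respect to $\alpha_i$ and setting it to zero gives
\[
-a_i^*+(1-\alpha_i)\frac{\partial a_i^*}{\partial\alpha_i}-\frac{\partial\E_{\mathcal E}[B_i(R_i^*)]}{\partial\alpha_i}=0,
\]
which, solved for $\alpha_i$, is exactly \eqref{optalfa}. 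I would need to argue that $\partial a_i^*/\partial\alpha_i\neq 0$ so the division is legitimate; this should follow from the implicit function theorem applied to \eqref{effortstar} together with the concavity hypothesis in Assumption~\ref{asum2} guaranteeing a unique interior $a_i^*$. The envelope-type simplifications that normally kill the indirect terms do \emph{not} apply cleanly here because the DRA is not optimizing the customer's objective, so the derivative must be carried through honestly — this bookkeeping of total versus partial derivatives is where care is required.

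Next I would handle the bonus function $B_i(\cdot)$. Since $B_i$ enters the DRA's per-customer payoff only through $(1-\alpha_i^*)a_i^*-\E_{\mathcal E}[B_i(R_i^*)]$ once $\alpha_i^*$ is fixed, and since $a_i^*$ depends on $B_i$ through \eqref{effortstar}, the optimal $B_i$ is the maximizer of this functional; writing this out is precisely \eqref{bp3}. I would note that this is a variational (functional) optimization rather than an ordinary one, so strictly it characterizes $B^*$ as the argmax of the stated functional rather than via a pointwise Euler–Lagrange equation; the paper's statement is content with the implicit/argmax characterization, so I would stop there. Finally I would remark that \eqref{optalfa} and \eqref{bp3} are coupled (each depends on the other through $a_i^*,R_i^*$), which is why the theorem says the optimal $\{\alpha_i^*, B_i^*\}$ are defined \emph{implicitly}, and I would check that the individual rationality constraint $\E_{\mathcal E}[V_i]\ge 0$ and Assumption~\ref{asum1} ($\sum\alpha_i<1$) are consistent with the stationary point — relegating, as the paper does, the remaining routine verifications to the Appendix.
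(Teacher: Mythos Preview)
Your proposal is correct and follows exactly the approach the paper intends: the paper's own proof is the single sentence ``Proof follows directly from \eqref{optpay},'' and what you have written is precisely the expansion of that sentence --- substituting $\E_{\mathcal E}[x_i]=a_i^*$ via Assumption~\ref{asum3}, separating across $i$, and taking the first-order condition in $\alpha_i$ to obtain \eqref{optalfa}, with \eqref{bp3} read off as the argmax characterization in $B_i$. Your additional care about $\partial a_i^*/\partial\alpha_i\neq 0$ and the total-versus-partial derivative bookkeeping is more than the paper itself supplies.
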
 
 \begin{proof}
Proof follows directly from \eqref{optpay}.
 \end{proof}

 \subsection{Example Contracts}
Notice that equations \eqref{Rstar2}-\eqref{bp3}  do not  constrain the choices of the contract terms or the resulting actions of the customers to be unique. We now make more assumptions on the problem and provide some example contracts that result.  We consider two scenarios:
\begin{itemize}
\item {\em Unspecified load reduction:} In the first scenario, we consider  the case when the DRA is interested in the overall load  reduction from all the customers to be as large as possible. In this case, we propose a bonus function of the form $B_i(R_i)=\mu (R_i -c)$ for every  customer $i$ with , where $c\geq0$ is a specified constant.
\item {\em Specified load reduction:} In the second case, we assume that the DRA wishes the overall load reduction to be equal to a given value $\Gamma$. In this case, the customers are in competition with each other for the load reduction they provide and the consequent payment they obtain. Thus, we must consider the bonus function to customer $i$  to be  a  function of not only her own report  $R_i$, but also the reports from other customers. Following the classical Cournot game \cite{cournot}, we propose a bonus function of the form  $B_i(\{R_{i}\})=R_i(\lambda- \sum_{j=1}^{N}R_j)$, where $\lambda$ is a designer-specified parameter that depends on $\Gamma$.
\end{itemize}


\subsubsection{Unspecified load reduction}
We begin with the case when the bonus function is of the form $B_i(R_i)=\mu (R_i -R_0)$ for all customers. In this case, there is no competition among the customers. Our first result says that we can simplify the incentive compatibility constraint.
 \begin {lem}
 \label{pro1}
 Consider the problem $\mathcal{P}_{1}$ such that $\forall i$, the bonus function does not depend on $R_{j}, j\neq i$ and is further of the form $B_i(R_i)=B(R_i)$. If the proposed contract structure in \eqref{c1} is incentive compatible, then  it holds that $\forall i,$
\begin{equation*}
  \frac{\partial B(R_i)}{\partial x_i}=\beta_i (R_i-x_i)\frac{\partial R_i}{\partial x_i} \qquad \textrm{ and }\qquad
\frac{\partial R_i}{\partial x_i}\geq0.
\end{equation*}
 In particular for the contract  $B(R_i)=\mu (R_i-R_0)$, these conditions reduce to $$\frac{\partial R_i}{\partial x_i}(\mu-\beta_i (R_i-x_i))=0\qquad \textrm{ and }\qquad\frac{\partial R_i}{\partial x_i}\geq0.$$
 \end{lem}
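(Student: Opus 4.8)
The plan is to start from the incentive compatibility condition and differentiate the customer's utility with respect to the true load reduction $x_i$, treating the report $R_i$ as the customer's optimal response function $R_i = R_i^*(x_i)$. Recall from \eqref{v1} that with a bonus of the form $B_i(R_i)=B(R_i)$ not depending on $R_j$ for $j\neq i$, the relevant (expected) utility of customer $i$ is $\E_{\mathcal{E}}[V_i] = \alpha_i x_i + \E_{\mathcal{E}}[B(R_i)] - h(a_i) - \beta_i(R_i-x_i)^2/2$, where the only term through which the other customers enter (the share term) is unaffected by $R_i$. Incentive compatibility means that the customer, when her true load reduction is $x_i$, cannot do better by reporting as if it were some $\hat x_i \neq x_i$; equivalently, the envelope/first-order condition in $R_i$ from Theorem~\ref{pr0} holds at $R_i = R_i^*(x_i)$, namely $\beta_i(R_i^* - x_i) = \partial \E_{\mathcal{E}_{-i}}[B(R_i)]/\partial R_i$ evaluated at $R_i^*$. (For a single-customer-type bonus the $\E_{\mathcal{E}_{-i}}$ is vacuous and this is just $\beta_i(R_i^*-x_i)=B'(R_i^*)$.)

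The first step is to obtain the stated identity $\partial B(R_i)/\partial x_i = \beta_i(R_i-x_i)\,\partial R_i/\partial x_i$. I would do this by applying the chain rule to the first-order condition: write $B'(R_i^*(x_i)) = \beta_i(R_i^*(x_i) - x_i)$, then either differentiate both sides in $x_i$, or more directly observe that $\partial B(R_i)/\partial x_i = B'(R_i)\,\partial R_i/\partial x_i$ and substitute $B'(R_i) = \beta_i(R_i-x_i)$ from the FOC, which yields the claim immediately. The second step is monotonicity, $\partial R_i/\partial x_i \geq 0$. This is the standard second-order / implicit-function argument of mechanism design: since $R_i^*(x_i)$ maximizes a function that is concave in $R_i$ (guaranteed by Assumption~\ref{asum2}, which posits $B$ concave so $\E[B(R_i)]-\beta_i(R_i-x_i)^2/2$ is strictly concave in $R_i$), differentiating the FOC in $x_i$ gives $\big(B''(R_i^*) - \beta_i\big)\partial R_i/\partial x_i + \beta_i = 0$, hence $\partial R_i/\partial x_i = \beta_i/\big(\beta_i - B''(R_i^*)\big) \geq 0$ because $B''\leq 0$. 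Alternatively one can invoke the usual monotone-comparative-statics fact that the optimal report is nondecreasing in $x_i$ because the cross-partial of the objective, $\partial^2/\partial R_i\partial x_i\big(-\beta_i(R_i-x_i)^2/2\big) = \beta_i > 0$, so the objective is supermodular in $(R_i,x_i)$.

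For the specialization to $B(R_i)=\mu(R_i-R_0)$, I would simply substitute $B'(R_i)=\mu$ and $B''(R_i)=0$ into the two derived conditions: the first identity becomes $\mu\,\partial R_i/\partial x_i = \beta_i(R_i-x_i)\,\partial R_i/\partial x_i$, i.e.\ $\partial R_i/\partial x_i\,(\mu - \beta_i(R_i-x_i)) = 0$, and the monotonicity condition $\partial R_i/\partial x_i\geq 0$ is inherited unchanged (consistent with $\partial R_i/\partial x_i = \beta_i/(\beta_i-0) = 1\geq 0$ from the formula above). I expect the only genuine subtlety — the ``hard part'' — to be arguing carefully why incentive compatibility entails the first-order condition in $R_i$ (a clean revelation-principle / direct-mechanism argument, which the paper has already set up via Theorem~\ref{pr0} and the discussion around \eqref{Rstar}), and making sure the differentiability of $R_i^*(x_i)$ used in the chain rule is justified — this follows from the implicit function theorem applied to the FOC given $\beta_i - B''>0$. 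Everything else is routine substitution.
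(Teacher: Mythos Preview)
Your proposal is correct and reaches the same conclusions as the paper's proof, but the framing differs slightly. The paper argues via the envelope theorem in the revelation-principle sense: it views incentive compatibility as the customer choosing the ``declared type'' $\hat{x}_i$, writes the envelope condition $dV/dx_i=\partial V/\partial x_i$ at $\hat{x}_i=x_i$, and expands the total derivative to extract $\partial B/\partial x_i=\beta_i(R_i-x_i)\,\partial R_i/\partial x_i$; for monotonicity it differentiates the local IC condition $\partial V/\partial\hat{x}_i|_{\hat{x}_i=x_i}=0$ in $x_i$ and invokes the second-order condition $\partial^2 V/\partial\hat{x}_i^2\le 0$. You instead work directly with the report $R_i$ as the choice variable: you pull the first-order condition $B'(R_i^*)=\beta_i(R_i^*-x_i)$ from Theorem~\ref{pr0}, combine it with the chain rule $\partial B/\partial x_i=B'(R_i)\,\partial R_i/\partial x_i$, and then obtain monotonicity by the implicit function theorem, producing the explicit formula $\partial R_i/\partial x_i=\beta_i/(\beta_i-B''(R_i^*))\ge 0$. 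The two routes are logically equivalent --- the envelope condition in $\hat{x}_i$ is just the FOC in $R_i$ composed with $R_i(\hat{x}_i)$ --- but your version is a bit more concrete (it actually computes $\partial R_i/\partial x_i$ rather than merely signing it) and leans on Theorem~\ref{pr0} and Assumption~\ref{asum2}, whereas the paper's argument is self-contained in the standard mechanism-design idiom. Either is perfectly acceptable here.
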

 \begin{proof}
See Appendix.
\end{proof}
 \begin{remark}
 The result implies that an incentive compatible contract will associate higher load reduction $x_i$ with a higher report $R_i$.
 \end{remark}
 
 With this result, we can restate the problem to be solved by the DRA as
  \begin{equation*}
\textrm{$\mathcal{P}_{2}$:}
\begin{cases}
&\underset{{\{\mu,\{\alpha_{i}\}\}}}\max  \E_\mathcal E[\Pi]\\
s.t. &\textrm{$\{a_{i}, R_{i}\}$ is chosen to maximize $\E_\mathcal E[V_i]$ by each}\\ &\textrm{ customer $i$}\\
& \mu \geq 0\\
& \textrm{Individual rationality constraint: } \E[V_i]\geq0\\
& \textrm{Incentive compatibility constraints: } \mu=\beta_{i} (R_i-x_i)\\
&\qquad\qquad\qquad\qquad\qquad\qquad\qquad \frac{\partial R_i}{\partial x_i}\geq0.
\end{cases}
\end{equation*}

The following result summarizes the optimal contract and the resulting actions under it for the problem $\mathcal{P}_{2}$.
\begin{thm}
\label{pro44}
Consider the problem $\mathcal{P}_2$ posed above. 
\begin{itemize}
\item The optimal contract obtained as a solution to the problem is specified by the relations
\begin{align*}
\mu^*&=\frac{R_0 \beta_i}{2}\\
\alpha^*_i&=0.5-\mu^*.
\end{align*}
\item In response to this optimal contract, every customer $i$ over-reports her true load reduction as $R^*_i=x_i+\frac{\mu^*}{\beta_i }$. Further, the customer exerts the effort $a^*_i=\mu^*+\alpha_i^*.$
\end{itemize}
\end{thm}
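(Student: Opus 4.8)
The plan is to specialize the general characterization results of Theorems~\ref{pr0}, \ref{pro22}, and \ref{proposition_opt_contract} to the affine bonus $B_i(R_i)=\mu(R_i-R_0)$ and solve the resulting scalar equations. First I would substitute $\partial B_i/\partial R_i = \mu$ into the optimal-reporting equation \eqref{Rstar2}; since $\E_{\mathcal{E}_{-i}}[B_i(R_i)]$ has $R_i$-derivative equal to $\mu$ (the bonus does not depend on $R_{j}$ for $j\neq i$), this immediately yields $R^*_i - x_i = \mu/\beta_i$, i.e.\ the customer over-reports by the constant amount $\mu/\beta_i$. This is the claimed report, modulo renaming $\mu^*$ for $\mu$; it also shows $\partial R_i/\partial x_i = 1 \geq 0$, so the incentive-compatibility condition from Lemma~\ref{pro1} is automatically met and reduces to $\mu = \beta_i(R_i^*-x_i)$, consistent with the above.

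Next I would plug this $R^*_i$ into the optimal-effort equation \eqref{effortstar}. With $\partial B_i/\partial R_i=\mu$ constant, the term $\frac{1}{2\beta_i}(\partial B_i/\partial R_i)^2 = \mu^2/(2\beta_i)$ is independent of $a_i$, and $\E_{\mathcal{E}}[B_i(R^*_i)] = \mu(\E[x_i]+\mu/\beta_i - R_0) = \mu(a_i + \mu/\beta_i - R_0)$ using $x_i=a_i+e_i$ with $m_e=0$; its $a_i$-derivative is $\mu$. Hence \eqref{effortstar} collapses to $a^*_i = \alpha_i + \mu$, which is the stated effort $a^*_i=\mu^*+\alpha^*_i$. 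I would note in passing that Assumption~\ref{asum2} (concavity of $\E_{\mathcal{E}}[V_i]$ in $a_i$) is what legitimizes treating this first-order condition as characterizing the maximizer; here $\E[V_i] = \alpha_i a_i + \mu(a_i + \mu/\beta_i - R_0) - h(a_i) - \frac{1}{2}\beta_i\,\E[(R_i^*-x_i)^2]$ and the only curvature in $a_i$ comes from $-h(a_i)$, so we are implicitly assuming $h$ convex.

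Then I would turn to the DRA's problem \eqref{optalfa}--\eqref{bp3}. For the share, substitute $a^*_i = \alpha_i+\mu$ (so $\partial a^*_i/\partial\alpha_i = 1$) and $\E_{\mathcal{E}}[B_i(R_i)] = \mu(\alpha_i + \mu + \mu/\beta_i - R_0)$ (so $\partial/\partial\alpha_i = 0$, since $\mu$ does not vary with $\alpha_i$ in this inner optimization) into \eqref{optalfa}, giving $\alpha^*_i = 1 - (a^*_i + 0)/1 = 1 - a^*_i = 1 - \alpha^*_i - \mu$, hence $\alpha^*_i = (1-\mu)/2 = 0.5 - \mu$. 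Finally, for $\mu$ I would carry out the outer maximization directly rather than through \eqref{bp3}: write the DRA's expected utility from customer $i$ as $\E[(1-\alpha_i)x_i - B_i(R_i)] = (1-\alpha_i)a_i^* - \mu(a_i^* + \mu/\beta_i - R_0)$, substitute $a^*_i=\alpha_i+\mu$ and $\alpha_i = 0.5-\mu$ to get a concave quadratic in $\mu$, and set its derivative to zero; this should produce $\mu^* = R_0\beta_i/2$. The main obstacle — and the point I would be most careful about — is bookkeeping of which quantities are held fixed when differentiating in \eqref{optalfa}: the partial derivatives $\partial a^*_i/\partial\alpha_i$ and $\partial\E[B_i]/\partial\alpha_i$ must be taken along the customer's best-response correspondence with $\mu$ treated as a separate decision variable, and conflating the two would give the wrong coefficients. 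A secondary subtlety is confirming that the individual-rationality constraint $\E[V_i]\geq 0$ is slack (or binds in a way consistent with the stated solution) rather than active, which I would check by evaluating $\E[V_i]$ at the claimed optimum; if it is violated one would need the constrained optimizer instead, so this check is essential to the validity of the stated unconstrained-looking formulas.
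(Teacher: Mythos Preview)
Your overall strategy matches the paper's exactly: specialize \eqref{Rstar2}, then \eqref{effortstar}, then \eqref{optalfa}--\eqref{bp3} to the affine bonus. The derivations of $R_i^*=x_i+\mu/\beta_i$ and $a_i^*=\alpha_i+\mu$ are correct and coincide with the paper's.

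The error is in the step for $\alpha_i^*$. You write $\E_{\mathcal{E}}[B_i(R_i^*)]=\mu(\alpha_i+\mu+\mu/\beta_i-R_0)$ and then assert $\partial/\partial\alpha_i=0$. But that expression visibly contains $\alpha_i$, precisely because $R_i^*$ depends on $x_i$, $\E[x_i]=a_i^*$, and $a_i^*=\alpha_i+\mu$; the correct derivative is $\mu$. With the correct value, \eqref{optalfa} reads
\[
\alpha_i^*=1-\frac{a_i^*+\mu}{1}=1-(\alpha_i^*+\mu)-\mu=1-\alpha_i^*-2\mu,
\]
which gives $\alpha_i^*=0.5-\mu$, the stated answer. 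Your version gives $\alpha_i^*=1-\alpha_i^*-\mu$, i.e.\ $\alpha_i^*=(1-\mu)/2=0.5-\mu/2$; you then wrote ``$(1-\mu)/2=0.5-\mu$'', an arithmetic slip that happens to land on the right formula but hides the underlying mistake. If you carry $\alpha_i^*=0.5-\mu/2$ forward into your $\mu$-optimization you will \emph{not} get $\mu^*=R_0\beta_i/2$ (you get $\mu^*=R_0\beta_i/(\beta_i+2)$ instead), so the error is not harmless. This is exactly the ``bookkeeping of which quantities are held fixed'' issue you flagged as the main obstacle: the expectation $\E_{\mathcal{E}}[B_i(R_i^*)]$ in \eqref{optalfa} must be differentiated along the customer's best response $a_i^*(\alpha_i,\mu)$, and here that dependence contributes a nonzero term.
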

\begin{proof}
See Appendix.
\end{proof}
\begin{remark}
Note that the constraint that $\alpha^*_{i}\geq 0$ implies the condition $c\beta_{i}\leq 1.$
\end{remark}



\subsubsection{Specified load reduction}
 We now consider the case when $B_i(R_i, \sum _{j=1}^{N}R_j)=R_i(\lambda-\sum _{j=1}^{N}R_j)$. Once again, we can simplify the incentive compatibility constraint according to the following result. 
\begin{lem}
\label{pro551}
Consider the problem $\mathcal {P}_1$ with a bonus function for the $i$-th customer that depends on the report $R_{i}$ submitted by the $i$-th customer and the sum of the reports $\sum_{j=1}^{N} R_j$ submitted by all other customers. If the proposed contract is incentive compatible, then it holds that  
\begin{align*}
&\frac {\partial {\E_{\mathcal{E}_{-i}}\left[B(R_i, \sum_{j=1}^{N} R_j)\right]}}{\partial {x_i}}= \beta_i(R_i-x_i)\frac {\partial {R_i}}{\partial {x_i}}\\
\nonumber &\frac{\partial R_i}{\partial x_i}\geq0.
\end{align*}In particular for the contract  $B_i(R_i, \sum _{j=1}^{N}R_j)=R_i(\lambda-\sum _{j=1}^{N}R_j)$, these conditions reduce to\begin{align}
&\frac{\partial R_i}{\partial x_i}\left[\lambda+\beta_i x_i-(\beta_i+2)R_i-\sum\limits_{\substack{{j=1}\\{j\neq i}}}^{N} \E_{\mathcal{E}_{-i}}[R_j]\right]= R_i \frac{\partial \sum\limits_{\substack{{j=1}\\{j\neq i}}}^{N} \E_{\mathcal{E}_{-i}}[R_j]}{\partial x_i} \label{IC2}, \\\nonumber& \frac{\partial R_i}{\partial x_i}\geq0.
\end{align}
\end{lem}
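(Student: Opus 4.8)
The plan is to follow the first-order (envelope) route already used for Lemma~\ref{pro1}, the only genuinely new feature being that customer $i$'s bonus now couples her to the others through $\sum_{j=1}^{N}R_j$. First I would reduce customer $i$'s reporting problem, after averaging over $\mathcal{E}_{-i}$, to one of the same shape as in Lemma~\ref{pro1}: since by Assumption~\ref{asum4} customer $i$ can neither observe nor influence $\{R_j\}_{j\neq i}$, when she chooses $R_i$ she treats these as exogenous random variables depending only on $\mathcal{E}_{-i}$, so writing $\mathcal{B}_i(R_i):=\E_{\mathcal{E}_{-i}}\big[B_i(R_i,\,R_i+\sum_{j\neq i}R_j)\big]$ her reporting-stage objective (with $h(a_i)$ already sunk and $\alpha_i x_i$ independent of $R_i$) is $\mathcal{B}_i(R_i)-\frac{\beta_i}{2}(R_i-x_i)^2$, which is exactly the structure handled in Theorem~\ref{pr0} and Lemma~\ref{pro1}.

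Second, I would impose incentive compatibility on this induced direct mechanism. The reporting first-order condition is $\mathcal{B}_i'(R_i)=\beta_i(R_i-x_i)$, the multi-customer analogue of Theorem~\ref{pr0}; and adding the IC inequalities for two types $x_i<x_i'$ makes the $\mathcal{B}_i$-terms cancel and leaves $(x_i'-x_i)\big(R_i(x_i')-R_i(x_i)\big)\geq 0$, i.e.\ $\partial R_i/\partial x_i\geq 0$. Differentiating $\E_{\mathcal{E}_{-i}}[B_i(R_i,\sum_j R_j)]$ along the equilibrium path and using the first-order condition to substitute for $\mathcal{B}_i'(R_i)$ then yields the first displayed identity of the lemma, namely $\partial\E_{\mathcal{E}_{-i}}[B_i(R_i,\sum_j R_j)]/\partial x_i=\beta_i(R_i-x_i)\,\partial R_i/\partial x_i$; this is the same computation as in Lemma~\ref{pro1}, now carried out with $B_i$ also depending on $\sum_j R_j$. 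Together with $\partial R_i/\partial x_i\geq 0$, this establishes the general assertion of the lemma.

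Third, I would specialize to $B_i(R_i,\sum_j R_j)=R_i(\lambda-\sum_j R_j)=\lambda R_i-R_i^2-R_i\sum_{j\neq i}R_j$. Taking $\E_{\mathcal{E}_{-i}}$ gives $\lambda R_i-R_i^2-R_i\sum_{j\neq i}\E_{\mathcal{E}_{-i}}[R_j]$; differentiating this in $x_i$ by the product rule (retaining the term $R_i\,\partial\big(\sum_{j\neq i}\E_{\mathcal{E}_{-i}}[R_j]\big)/\partial x_i$), substituting the result into the identity just obtained, and collecting all $\partial R_i/\partial x_i$ contributions on the left produces exactly~\eqref{IC2}; the monotonicity condition $\partial R_i/\partial x_i\geq 0$ carries over unchanged.

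The step I expect to be the main obstacle is making the first-order approach rigorous while keeping the strategic bookkeeping straight: the reporting first-order condition has to be read as a unilateral deviation (the other customers' strategies frozen), whereas the $x_i$-derivative entering the incentive-compatibility identity is taken along the equilibrium path, so one must argue that these two computations are mutually consistent and pin down exactly which $x_i$-dependence survives — which is precisely why the statement keeps the $R_i\,\partial\big(\sum_{j\neq i}\E_{\mathcal{E}_{-i}}[R_j]\big)/\partial x_i$ term explicit rather than discarding it. I would also flag the standing regularity caveat of Assumption~\ref{asum2} (twice-differentiability and concavity of the bonus, which makes the reporting objective concave in $R_i$), under which the first-order conditions genuinely characterize the customer's optimum.
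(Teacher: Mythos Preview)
Your proposal is correct and follows essentially the same route as the paper: reduce customer $i$'s reporting problem to the single-customer shape of Lemma~\ref{pro1} by taking $\E_{\mathcal{E}_{-i}}$ (invoking Assumption~\ref{asum4}), then invoke the envelope/first-order argument of that lemma, and finally specialize to the Cournot bonus by direct differentiation. The only minor difference is that you obtain $\partial R_i/\partial x_i\geq 0$ via the standard two-type IC-inequality trick, whereas the paper (in the proof of Lemma~\ref{pro1}, which it simply cites here) extracts it from the second-order condition by differentiating the first-order IC identity in $x_i$; both are standard and equivalent under the smoothness of Assumption~\ref{asum2}.
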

\begin{proof}
See Appendix.
\end{proof}





 
We can now restate the problem $\mathcal{P}_{1}$ to be solved by the DRA as follows.
\begin{equation*}
\textrm{$\mathcal{P}_{3}$:}
\begin{cases}
&\underset{\{\alpha_{i}\}}\max  \E_\mathcal E[\Pi]\\
s.t. &\textrm{$\{a_{i}, R_{i}\}$ is chosen to maximize $\E_\mathcal E[V_i]$ by each}\\ &\textrm{ customer $i$}\\
& \textrm{Individual rationality constraint: } \E[V_i]\geq0\\
& \textrm{Incentive compatibility constraints specified by \eqref{IC2}} \\
&\textrm{expected overall load reduction = }\Gamma.
\end{cases}
\end{equation*}

Since the bonus paid to $i$-th customer is a function  not only of $R_i$, but also of the reports from the other customers, the customers compete against each other to gain the maximum compensation possible. Thus, the optimal strategies of the players become interdependent. We analyze this interdependence in the usual Nash Equilibrium sense. For the following result, we make the simplification that all the parameters $\alpha_i$'s and $\beta_i$'s are constants with $\alpha_i=\frac{\alpha}{N}$ and $\beta_i=\beta$, $\forall i$. 
 

We first present the following initial result.

\begin{thm}
\label{theorem2}
Consider the problem $\mathcal P_{3}$. Define the variables
\begin{align*}
A&=\frac{\beta+F}{\beta+1+N}\\
B&=\frac{\beta(\beta+F)}{(\beta+1)(\beta+1+N)}\\
C&=1+\frac{2(\beta+F)^2}{(\beta+2)^2}-\frac{2(\beta+F)F}{(\beta+2)}+\frac{\beta(4-4F+F^2)}{(\beta+2)^2}\\
D&=\frac{(\frac{\beta}{\beta+1+N})^2}{C+(N-1)B}\\
E&=\frac{N}{\beta+1+N}\\
F&= \frac{\beta (N-1)}{(\beta+1+N)(\beta+1)}.
\end{align*}
There is a unique Nash Equilibrium among the users and the DRA as given by the following: 
\begin{enumerate}[label=(\roman*)]
\item The DRA selects the contract as
\begin{align}
\label{eq:share_shared} \alpha_{i}^*&=\frac{1-\lambda[A(1-2ND)+\frac{\beta}{\beta+1+N}(1-2E)]}{2(1-ND)}\\
\lambda^*&=\frac{(C+(N-1)B)\Gamma-N\alpha_{i}^*}{AN}.
\label{eq:bonus_shared}
\end{align}
\item The customers exert the optimal effort and report as
\begin{align}
\label{eq:action_shared}
a_i^*&= \frac{\alpha_i+A\lambda-B\sum\limits_{\substack{{j=1}\\{j\neq i}}}^{N} a_j }{C}=\frac{\alpha_i+A\lambda}{C+(N-1)B}\\
R^*_i&=\frac{\lambda-\left(G-F a_i^*\right)+\beta x_i}{\beta+2},
\label{eq:report_shared}
\end{align}
where
$$
G=\frac{\lambda(N-1)}{\beta+1+N}+ \frac{\beta(\beta+2)\sum\limits_{\substack{{j=1}\\{j\neq i}}}^{N} a_j^*}{(\beta+1)(\beta+1+N)}.$$
\end{enumerate}
This equilibrium always exists.
\end{thm}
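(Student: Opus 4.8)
The plan is to follow the three-step decomposition already set up in Section~\ref{sec3}---solve for the reports, then the efforts, then the DRA's parameters---while tracking the strategic coupling introduced by the shared bonus $B_i(\{R_j\})=R_i(\lambda-\sum_{j=1}^N R_j)$, and finally to argue that the resulting fixed point is the unique equilibrium and that it always exists. For the first step I would substitute this $B_i$ into \eqref{Rstar2} of Theorem~\ref{pr0}. Since $\E_{\mathcal{E}_{-i}}[B_i]=\lambda R_i-R_i^2-R_i\sum_{j\neq i}\E_{\mathcal{E}_{-i}}[R_j]$, condition \eqref{Rstar2} reduces to the affine relation $(\beta+2)R_i^*=\lambda+\beta x_i-\sum_{j\neq i}\E_{\mathcal{E}_{-i}}[R_j]$. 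Taking $\E_{\mathcal{E}}$ (and using $\E[x_i]=a_i$ because $m_e=0$) turns the means $r_j\triangleq\E_{\mathcal{E}}[R_j^*]$ into a linear system whose solution is $\sum_k r_k=\frac{N\lambda+\beta\sum_k a_k}{\beta+1+N}$; substituting this back and separating out the term proportional to $a_i$ gives $\sum_{j\neq i}r_j=G-F a_i$ with $G$ and $F$ exactly as defined in the statement, hence \eqref{eq:report_shared}.

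For the second step I would plug $R_i^*$ into \eqref{effortstar} of Theorem~\ref{pro22}. Along the optimal report, $\partial B_i/\partial R_i$ equals (in $\mathcal{E}_{-i}$-expectation) $\beta(R_i^*-x_i)$, so the bracketed objective in \eqref{effortstar} is, after taking expectations, a quadratic in $a_i$ and its derivative is affine; a deviating customer $i$ internalizes only her own $x_i$ and the effect of her effort on the common aggregate $\sum_k a_k$, so her best response is the Cournot-type linear relation $C a_i^*+B\sum_{j\neq i}a_j=\alpha_i+A\lambda$ with $A,B,C$ as defined. Imposing symmetry collapses this to $a_i^*=(\alpha_i+A\lambda)/(C+(N-1)B)$, the second form of \eqref{eq:action_shared}. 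For the third step, I would insert these responses into $\E_{\mathcal{E}}[\Pi]=\sum_i\big[(1-\alpha_i)a_i-\E_{\mathcal{E}}[B_i(R_i^*)]\big]$ and optimize over $(\{\alpha_i\},\lambda)$ subject to the target constraint $\sum_i a_i^*=\Gamma$. Solving that constraint for $\lambda$ in terms of $\alpha_i$ gives \eqref{eq:bonus_shared} (and one then checks $\sum_i a_i^*=\Gamma$ is an identity); writing the remaining first-order condition in $\alpha_i$---into which $\partial a_i^*/\partial\alpha_i$, $\partial(\sum_k r_k)/\partial\lambda=E$, and the factor $D$ enter---and solving the resulting scalar linear equation yields \eqref{eq:share_shared}.

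For uniqueness, each of the three systems above is linear with a nonzero leading coefficient---$\beta+2>0$ and $\beta+1+N>0$ for the report map, $C+(N-1)B$ for the effort system, and $1-ND$ together with $AN$ for the DRA equation---so each has exactly one solution, provided I verify that these denominators never vanish on the admissible parameter range. For existence ``always'', it remains to check that the closed-form solution is feasible for every target $\Gamma$: nonnegativity of the bonus $B_i(R_i^*)\geq 0$ and $\sum_i\alpha_i^*<1$ (Assumptions~\ref{asum1} and~\ref{asum2}), the individual-rationality inequalities $\E_{\mathcal{E}}[V_i]\geq 0$, and that the stationary points are genuine maxima---the last point being precisely the concavity supplied by Assumption~\ref{asum2} (concavity of $\E_{\mathcal{E}}[V_i]$ in $a_i$ and of $B_i$ in $R_i$), which makes the first-order conditions sufficient.

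The step I expect to be the main obstacle is the second one (and, with it, the uniqueness claim): one has to pin down correctly what a unilaterally deviating customer treats as fixed---the other customers' \emph{report functions} and \emph{efforts}, but not the realized aggregate---so that her effort best response is the stated Cournot system rather than something with different coefficients. Getting that accounting right is exactly what forces the particular constants $A,B,C$ and, through them, $D,E,F,G$; once the coupling is fixed, matching the displayed closed forms is lengthy but purely mechanical algebra.
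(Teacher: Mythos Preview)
Your proposal is correct and follows essentially the same route as the paper: apply Theorem~\ref{pr0} to the Cournot bonus to get the affine report system and solve it for $\sum_{j\neq i}\E[R_j^*]=G-Fa_i$, apply Theorem~\ref{pro22} to obtain the linear best-response system $Ca_i^*+B\sum_{j\neq i}a_j=\alpha_i+A\lambda$ and its symmetric solution, impose the target $\Gamma=Na_i^*$ to pin down $\lambda^*$, and take the first-order condition of $\E_{\mathcal E}[\Pi]$ in $\alpha$ to obtain \eqref{eq:share_shared}. One small caveat on your existence paragraph: the paper reads ``always exists'' as the linear systems being uniquely solvable (your denominator checks), not as feasibility of all side constraints for arbitrary $\Gamma$---indeed the Remark immediately following the theorem derives an upper bound on $\Gamma$ required for $\alpha_i^*\geq 0$---so you should not attempt to verify $B_i\geq 0$, $\sum_i\alpha_i^*<1$, and individual rationality for every $\Gamma$.
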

\begin{proof}
See Appendix.
\end{proof}
\begin{remark}
To obtain the conditions for $\alpha_{i}^*\geq0$, we can substitute \eqref{eq:share_shared} in \eqref{eq:bonus_shared} to  obtain
\begin{align*}
\Gamma&=N\frac{\alpha_i^*+A\frac{1-\alpha^*2(1-ND)}{A(1-2ND)+\frac{\beta (1-2E)}{\beta+1+N}}}{C+(N-1)B}\\
\Rightarrow\alpha_i^*&=\frac{A-\Gamma \frac{(C+(N-1)B)[A(1-2ND)+\frac{\beta(1-2E)}{\beta+1+N}]}{N}}{\frac{\beta (2E-1)}{\beta+1+N}+A}.
\end{align*}
Note that the denominator evaluates to 
\begin{align*}
&\frac{\beta (2E-1)}{\beta+1+N}+A\\
&=\frac{\beta}{\beta+1+N} \frac{N-\beta-1}{\beta+N+1}+\frac{\beta+F}{\beta+1+N}\\
&=\beta\left(\frac{2N}{(\beta+N+1)^2}+\frac{(N-1)}{(\beta+1+N)^2(\beta+1)} \right)\\& \geq 0.
\end{align*}
Thus, the condition for $\alpha^*_i\geq0$ is refined to the choice of $\Gamma$, $N$ and $\beta$ which satisfy 
\[
\Gamma\leq \frac{NA} {(C+(N-1)B)[A(1-2ND)+\frac{\beta(1-2E)}{\beta+1+N}]}.
\]
The condition implies that as the desired load reduction $\Gamma$ increases, the number of customers $N$ that the DRA contracts with must increase as well.
\end{remark}
%
%
%
%
%
%
\begin{remark}
\label{rem.int}
Note that the optimal level of  the effort $a_i^*$ expended by the $i$-th customer is an increasing function of both the assigned  share $\alpha$ and the total amount of desired load reduction $\lambda$, which is intuitively specifying. \end{remark}

 \subsection{Extensions} 
\label{sec5}
Although the above development was done with some specific assumptions, the contracts can be generalized to remove many of these assumptions. We provide some examples below. For notational ease, we consider the case when $N=1$ and drop the subscript $i$ referring to the $i$-th customer. Further, we assume that the parameter $\beta=1$ and the bonus function is given by $B(R)=R(\lambda-R).$ 

\subsubsection {Realization error with non-zero mean}
\label{ex1}
The effort $a$ by the customer is assumed to lead to the realization of load reduction $x$. As specified by Assumption \ref{asum3}, in the development so far, we assumed that the realization error $e=x-a$ is a random variable with mean zero. If, instead, the error has mean $m_{e}$, then the following result summarizes the optimal contract. 
\begin{pro}
Consider the problem $\mathcal{P}_{3}$ for $N=\beta=1$ and mean $m_{e}$ of the realization error. 
\begin{itemize}
\item The optimal contract is given by 
\begin{align*}
\alpha^*&=\frac{7.5-3 \lambda+4.3m_e}{14}\\
\lambda^*&=5\Gamma-3\left(\alpha^*+m_{e}\right).
\end{align*}
\item The optimal effort and the report by the customer are given by
\begin{align*}
a^*&=\frac{3\alpha+ \lambda-2m_e}{5}\\
R^*&=\frac{\lambda+ x}{3}.
\end{align*}
\end{itemize}
\end{pro}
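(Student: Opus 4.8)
The plan is to specialize the three–step procedure of Section~\ref{sec3} --- Theorem~\ref{pr0} for the report, Theorem~\ref{pro22} for the effort, and Theorem~\ref{proposition_opt_contract} together with the prescribed load–reduction constraint for the contract --- to the case $N=\beta=1$, $B(R)=R(\lambda-R)$, and then re-run each step while carrying the extra terms produced by $\E[e]=m_e\neq0$. The only ingredients that change relative to the mean-zero analysis are the first two moments of the realized reduction: under Assumption~\ref{asum3} with $m_e\neq0$ one has $\E[x]=a+m_e$ and $\E[x^2]=(a+m_e)^2+\sigma^2$, so every expectation in the earlier derivations must be recomputed with these.

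First I would compute the optimal report. Since $N=1$, the expectation $\E_{\mathcal{E}_{-i}}$ in \eqref{Rstar2} is vacuous, and with $B(R)=R(\lambda-R)$ we have $\partial B/\partial R=\lambda-2R$, so \eqref{Rstar2} becomes $R^*-x=(\lambda-2R^*)/\beta$; with $\beta=1$ this yields $R^*=(\lambda+x)/3$. This relation is untouched by $m_e$ because the report is chosen after $x$ has been realized. Next I would substitute $R^*=(\lambda+x)/3$ into \eqref{effortstar}. It is cleanest to first use \eqref{Rstar2} itself, which gives $\partial B/\partial R|_{R^*}=\beta(R^*-x)$, to collapse the effort condition to $h'(a^*)=\alpha+\beta\,\E[R^*-x]$. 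With $h(a)=a^2/2$, $\beta=1$, and $\E[R^*-x]=(\lambda-2\E[x])/3=(\lambda-2(a^*+m_e))/3$, this is a scalar linear equation whose solution is $a^*=(3\alpha+\lambda-2m_e)/5$. The same answer also comes directly out of \eqref{effortstar} after inserting the two moments above; the $\sigma^2$ piece is an additive constant and drops upon differentiation in $a$.

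For the contract step I would write $\E[\Pi]=(1-\alpha)\E[x]-\E[B(R^*)]$, with $\E[x]=a^*+m_e$ and, after expanding $B(R^*)$ and taking expectations, $\E[B(R^*)]=(2\lambda^2+\lambda\,\E[x]-\E[x^2])/9$. The requirement that the expected overall load reduction equal $\Gamma$ is $\E[x]=\Gamma$, which combined with the Step-2 formula for $a^*$ gives the second stated relation $\lambda^*=5\Gamma-3(\alpha^*+m_e)$. For the share I would impose the first-order condition \eqref{optalfa} for $\alpha$, being careful that here $\E[x]=a^*+m_e$ contributes an additional $m_e$ term to its numerator relative to the mean-zero version of \eqref{optalfa}; substituting $a^*$, $R^*$ and the moments and collecting terms leaves a single linear equation in $\alpha^*$ (with $\lambda$ as a parameter) whose solution is the first stated relation. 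Together the two relations pin down $(\alpha^*,\lambda^*)$, and the resulting $a^*$ and $R^*$ are then as claimed.

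The hard part is not any individual step but the bookkeeping in the contract step: $m_e$ enters simultaneously through the DRA's revenue term $\E[x]$, through the customer's transfer $\alpha\E[x]$, and through $\E[x^2]$ inside $\E[B(R^*)]$, so one must differentiate with respect to $\alpha$ holding $\lambda$ fixed before substituting the equilibrium maps $a^*=a^*(\alpha,\lambda)$ and $\lambda^*=\lambda^*(\alpha,\Gamma)$, and one should verify that $\sigma^2$ never survives as anything but an additive constant in the quantities that are differentiated. Steps~1 and~2, by contrast, are routine single-variable calculus once the two moments are updated.
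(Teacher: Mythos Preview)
Your proposal is correct and follows essentially the same approach as the paper, which simply states that the proof follows along the lines of that of Theorem~\ref{theorem2}; you have supplied the detailed specialization that the paper omits. Your observation that the numerator of~\eqref{optalfa} must be modified from $a^*$ to $\E[x]=a^*+m_e$ in the nonzero-mean case, and your envelope-type shortcut $h'(a^*)=\alpha+\beta\,\E[R^*-x]$ for the effort step, are both accurate and in keeping with the paper's method.
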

\begin{proof}
The proof follows in a straight-forward manner along the lines of that of Theorem~\ref{theorem2}. 
\end{proof}
\begin{remark}
Note that the optimal reporting function $R^*$ does not depend on $m_{e}$.  Further, as $m_e$ increases (resp. decreases), 
\begin{itemize}
\item the expected load saving for the same contract increases (resp. decreases),
\item the optimal effort exerted by customer is lower (resp. higher),
\item the optimal value $\alpha^*$ of the share provided by DRA to the customer will increase (resp. decrease).
\end{itemize}
\end{remark}



\subsubsection {Inexact knowledge of  the true load reduction}
\label{ex2}
So far, we assumed that at $t_{5}$, the DRA has an accurate knowledge of the true load reduction $x$ due to the customer. In practice, it may only be able to estimate this reduction by, e.g., large scale data analysis on all similar customers on that day or historical behavioral of the same customer. Let the DRA observe a noisy estimate $y=x+n$ of the load reduction at $t_{5}$, where $n$ denotes the estimation error. We assume that this error in independent of $X$ and has mean $m_n$.  In this case, the share of the profit assigned to the customer changes to $\alpha y$. In other words, the utility functions of the customer and the DRA from \eqref{v1} and \eqref{v2} alter to

\begin{align}
\label{v1_new}V&=\alpha y+B(R)-h(a)-\beta\frac{(R-x)^{2}}{2}\\
\label{v2_new}\Pi&=(1-\alpha) y-B(R).
\end{align}
We have the following result that can be proved along the lines of Theorem~\ref{theorem2}.
\begin{pro}
Consider the problem $\mathcal{P}_{3}$ for $N=\beta=1$ and with $n$ denoting the error in estimating the load reduction $x$ at $t_{5}$, so that the utility functions of the customer and the DRA are given by~(\ref{v1_new}) and~(\ref{v2_new}). 
\begin{itemize}
\item The optimal contract is given by 
\begin{align*}
\alpha^*&=\frac{7.5-3 \lambda-12.5m_n}{14}\\
\lambda^*&=5\Gamma-3\alpha^*.
\end{align*}
\item The optimal effort and the report by the customer are given by
\begin{align*}
a^*&=\frac{3\alpha+ \lambda}{5}\\
R^*&=\frac{\lambda+ x}{3}.
\end{align*}
\end{itemize}
\end{pro}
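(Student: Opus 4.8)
The plan is to mirror the three–step argument behind Theorem~\ref{theorem2} (equivalently, the sequence \eqref{Rstar2}--\eqref{bp3}), replacing the utility pair \eqref{v1}--\eqref{v2} by \eqref{v1_new}--\eqref{v2_new} and specializing to $N=\beta=1$ with $B(R)=R(\lambda-R)$. The structural fact that makes the adaptation clean is that the noisy estimate $y=x+n$ enters the customer's utility $V$ only through the additive term $\alpha y$, that $y$ is independent of the report $R$, and that $\E[y\mid a]=a+m_n$ (using $m_e=0$). Hence $m_n$ enters $\E[V]$ only as the additive constant $\alpha m_n$, so it drops out of both first-order conditions $\partial\E[V]/\partial a=0$ and $\partial\E[V]/\partial R=0$; the customer's best responses are therefore inherited unchanged from the $m_n=0$ analysis, while $m_n$ does survive in the DRA's objective, where it is weighted by $1-\alpha$.

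\emph{Step 1 (optimal report).} For $N=1$ the inner expectation in \eqref{Rstar2} is vacuous, so Theorem~\ref{pr0} with $B(R)=R(\lambda-R)$, $\beta=1$ gives $R^*-x=\lambda-2R^*$, i.e. $R^*=(\lambda+x)/3$, independent of $m_n$. \emph{Step 2 (optimal effort).} Inserting $R^*$ into \eqref{v1_new}, writing $x=a+e$ with $\E[e]=0$ and $\E[y]=a+m_n$, the first-order condition \eqref{effortstar} reads $a^*=\alpha+\frac{\partial}{\partial a}\E[B(R^*)-\tfrac12(\partial_R B)^2]$; the additive $\alpha m_n$ has zero $a$-derivative, so the computation is identical to the $m_n=0$ one and yields $a^*=(3\alpha+\lambda)/5$.

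\emph{Step 3 (optimal contract).} Substituting $a^*$ and $R^*$ into \eqref{v2_new} gives $\E[\Pi]=(1-\alpha)(a^*+m_n)-\E[B(R^*)]$, with $\E[B(R^*)]=\lambda\E[R^*]-\E[(R^*)^2]$ and $\E[R^*]=(\lambda+a^*)/3$. I would then form $\frac{\partial\E[\Pi]}{\partial\alpha}=0$ holding $\lambda$ fixed --- propagating $\frac{\partial a^*}{\partial\alpha}=\tfrac35$ through $\E[B(R^*)]$ by the chain rule, exactly the manipulation that produces \eqref{optalfa} --- and clear denominators; because of the $(1-\alpha)m_n$ term this now carries $m_n$, and it simplifies to $\alpha^*=(7.5-3\lambda-12.5\,m_n)/14$. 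The second relation is the target constraint: the expected overall load reduction is $\E[x]=a^*$, so imposing $a^*=\Gamma$ (the $N=1$ instance of \eqref{eq:bonus_shared}) gives $\lambda^*=5\Gamma-3\alpha^*$. I would close the argument by verifying the ex-ante individual rationality constraint $\E[V]\ge 0$ and the concavity hypotheses of Assumption~\ref{asum2} at this point, so that the stationary point is the optimum.

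The routine-but-delicate part, and the one I would watch most closely, is Step~3: differentiating $\E[B(R^*)]$ with respect to $\alpha$ through the induced change in $a^*$ (hence in $\E[R^*]$ and $\E[(R^*)^2]$), and keeping the bookkeeping so that $m_n$ is carried only through the $(1-\alpha)\E[y]$ term and nowhere else --- in particular noting that the variance of $n$ never enters, consistent with the absence of any $n$-variance in the claimed formulas. A secondary point worth stating explicitly, again as in Theorem~\ref{theorem2}, is that the DRA's two levers are treated separately --- $\alpha$ is fixed by the profit first-order condition and $\lambda$ by the load-reduction target, rather than by a single joint optimization --- which is exactly why $\lambda$ (not $\Gamma$) appears in the expression for $\alpha^*$.
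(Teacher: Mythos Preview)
Your proposal is correct and follows exactly the approach the paper indicates (it merely states the result ``can be proved along the lines of Theorem~\ref{theorem2}'' and gives no further details). Your structural observation that $m_n$ enters $\E[V]$ only through the additive constant $\alpha m_n$---so the customer's best responses $R^*$ and $a^*$ coincide with the $m_n=0$ case, while $m_n$ survives in $\E[\Pi]$ via $(1-\alpha)(a^*+m_n)$---is precisely what makes the adaptation from Theorem~\ref{theorem2} routine, and your Step~3 bookkeeping (propagating $\partial a^*/\partial\alpha=3/5$ through $\E[B(R^*)]$ and clearing denominators) yields the stated $\alpha^*$ on the nose.
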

\begin{remark}
Note that the optimal reporting function $R^*$ and the optimal effort $a^*$ do not depend on $m_{n}$.  Further, as $m_n$ increases (resp. decreases),  the optimal value $\alpha^*$ of the share provided by DRA to the customer decreases (resp. increases). 
\end{remark}

%


\section{ illustration and discussion}
\label{illus}

\begin{figure}[!htb]
\centering
\includegraphics[width=7cm, height=4cm]{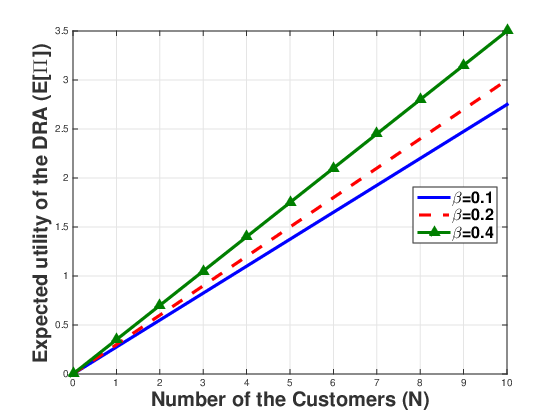}
\caption{Expected utility of the DRA for various values of $\beta$ as a function of number of the customers $N$ for the specified load reduction scenario. }
\label{fig:digraph}
\end{figure}

\begin{figure}[h!]  
\begin{subfigure}{.5\textwidth}  
  \centering  
  \includegraphics[width=7cm, height=4cm] {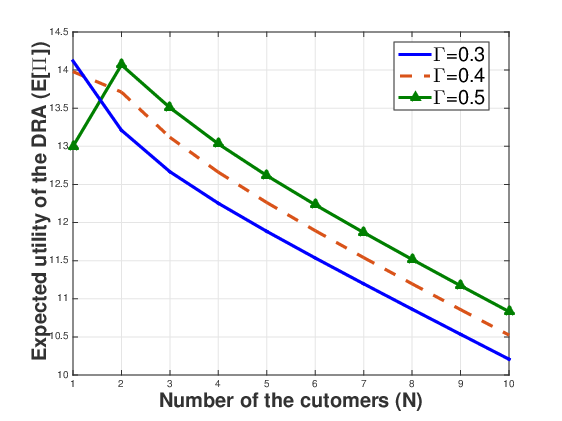}
    \caption{}   
    \label{uDRA(N)}
\end{subfigure}  

\begin{subfigure}{.5\textwidth}  
  \centering  
  \includegraphics[width=7cm, height=4cm]  {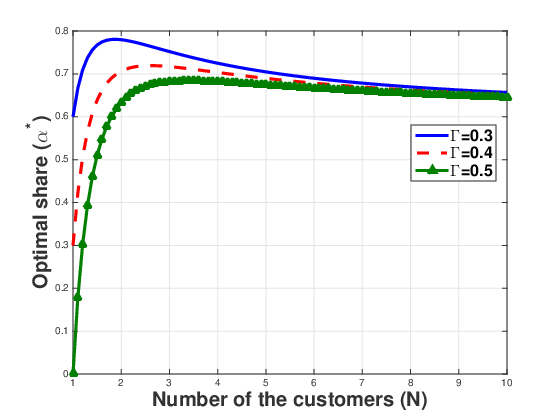}
    \caption{}   
       \label{ALPHA-N}
\end{subfigure}   

\begin{subfigure}{.5\textwidth}  
\centering
  \includegraphics[width=7cm, height=4cm]{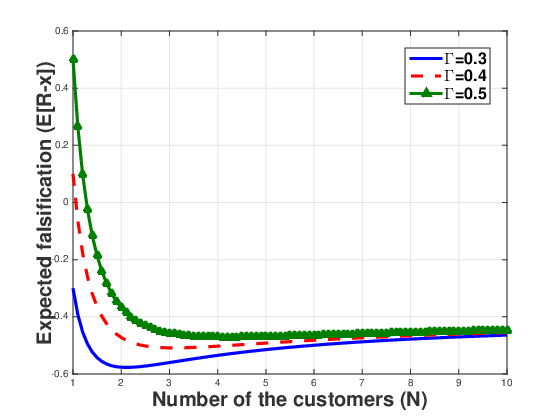} 
    \caption{}   
    \label{fals-N} 
\end{subfigure}  

\caption{Expected utility of the DRA (a), optimal value of the share to the customers (b), and expected value of the falsification by the customer as a function of $N$ for various $\Gamma$, under the bonus function $B_i=R_i(\lambda-\sum _{j=1}^{N}R_j)$,  given $\beta=1$.}  
\end{figure}

We now present some illustrative numerical examples. We first consider the unspecified load reduction scenario. We set $R_{0}=1$ and assume that $\beta_{i}=\beta$, $\forall i$. Figure \ref{fig:digraph} presents the expected utility of the DRA with the optimal contract as presented in Theorem~\ref{pro44} for various values of $\beta$ as we vary the number of the customers $N$ that the DRA contracts with.  As shown in this figure, for a fixed value of $\beta$, the expected utility of the DRA  increases linearly with the number of customers. This is intuitively specifying since as specified by Theorem~\ref{pro44}, for a fixed $\beta$, the effort invested by each customer for the optimal contract is a constant independent of $N$ or $\mu$.  Further, we observe that for a fixed $N$, as $\beta$ increases, the expected utility of the DRA increases. In other words, for the same expected load reduction, the DRA needs to pay less to the customers. Note that satisfying the condition for $\alpha^* \geq 0$ precludes the choice of an arbitrarily large $\beta$ by the DRA.

Next, we consider the specified load reduction scenario. We set $\beta=1$.  Figure  \ref{uDRA(N)} shows how the expected utility of the DRA varies as a function of the number of customers contracted by the DRA for various value of the total expected load reduction $\Gamma$ that the DRA desires. We can observe that for a small number of customers,  the expected utility of the DRA  is a decreasing function of $\Gamma$ while for large enough $N$, it is an increasing function of $\Gamma$. Intuitively, if the number of customers that the DRA has contracted with is too small, it must pay too high a compensation for realizing the desired load reduction. In fact, as the expected load reduction that it wishes increases, the expected utility of the DRA may become negative unless the number of customers is also increased. Once a sufficient number of customers have been contracted with, the total payment once again decreases with the number of customers. Once again, this does not imply that the number of customers can be increased arbitrarily given the constraints of $\alpha^* \geq 0$ and individual rationality for the customers. Viewed alternatively, for the same number of customers, the total expected load reduction $\Gamma$ is bounded by these constraints.

For the same setting, the optimal value of the share $\alpha^*$ assigned to the customer as a function of the number of customers for various value of $\Gamma$ is  illustrated in Figure  \ref{ALPHA-N}.  The plot indicates that the optimal value of the share assigned to the customer by the DRA is always positive as desired. Further, it is a decreasing function of  the expected load reduction $\Gamma$ desired by the DRA. This plot illustrates that the constraint $\alpha^*>0$ imposes an upper bound on the accepted value of $\Gamma$. For a given value of $\Gamma$, the variation of the optimal share is non-intuitive, although it should be noted that for a large enough number of customers, the share converges to the same value.

Figure \ref{fals-N}  displays the expected value of the falsification by each customer as a function of the number of customers for various value of $\Gamma$.   Figure \ref{fals-N}  implies that the expected value of the falsification decreases when the DRA chooses a larger $\Gamma$; in fact, it becomes negative for a high enough $\Gamma$. The negative falsification is interesting since it implies that the customer {\em under-reports} her load reduction. Note that a larger value of $\Gamma$ can be interpreted as a higher expected utility of the DRA. Thus, as the expected utility of the DRA increases, the customers 
under-report their true load reduction since they can gain more compensation through their shares. 


Finally, we illustrate the impact of the realization error $m_e$  and the estimation  error $m_n$. We consider a single customer and set $\beta=1$. Figure     \ref{payment-me} plots the expected payment by the DRA as a function of $m_{e}$. As can be seen, the pattern of variation is quite complex. For a large enough $m_{e}$, the expected load reduction by the customer is large. Thus, the payment through the shares dominates and the expected payment also increases. 
Figure     \ref{payment-mn} plots the expected payment by the DRA as a function of $m_{n}$. The figure illustrates that as the mean of the error with which the DRA estimates the true load reduction increases, it increases the compensation it provides to the customer. In addition, as the DRA wishes to realize a larger value of $\Gamma$, the expected value of the compensation also becomes larger. This is expected since a higher value of $m_{n}$ implies that the DRA observes a higher load reduction compared to the one realized in practice; consequently, it rewards the customer more based on its own observation.

\begin{figure}[h!]  
\begin{subfigure}{.5\textwidth}  
\centering
  \includegraphics[width=7cm, height=4cm]{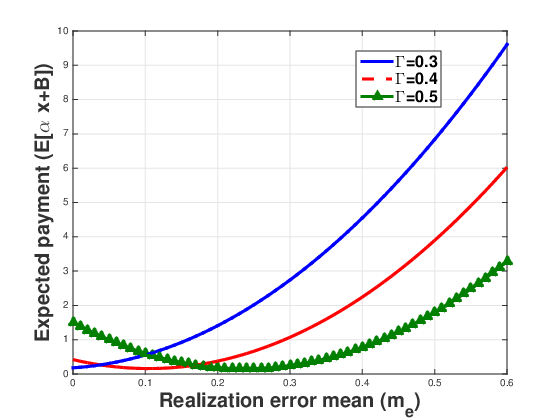}
    \caption{}  
    \label{payment-me} 
\end{subfigure}  

\begin{subfigure}{.5\textwidth}  
\centering
  \includegraphics[width=7cm, height=4cm]{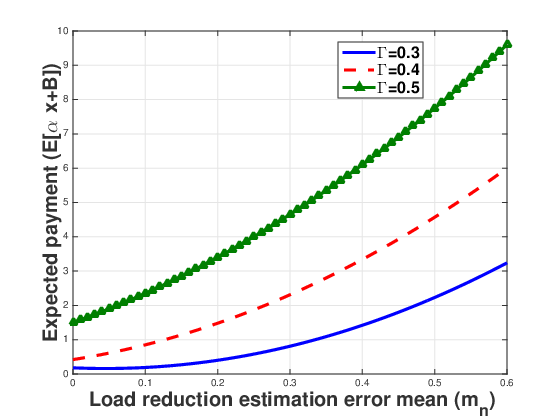}
    \caption{}  
        \label{payment-mn}  
\end{subfigure} 
\caption{The expected payment to the customer  $(a)$ as  a function of $m_e$ and $(b)$ as a function of $m_n$, for different values of $\Gamma$, given $N=1$. }
\end{figure}


\section{Conclusion and future directions}
\label{concl}
In this paper, we designed an optimal contract between a demand response aggregator (DRA) and power grid customers for incentive-based demand response. We considered a setting in which the DRA asks the customers to reduce their electricity consumption and compensates them for this demand curtailment. However, given that the DRA must supply every customer with as much power as she desires, a strategic customer can temporarily increase her base load to report a larger reduction as  part of the demand response event. The DRA wishes to incentivize the customers  both to  make  costly effort to reduce load and to not falsify the reported load reduction. We modeled this problem  as a contract design problem and presented a solution. The proposed contract consists of a part that depends on (the possibly inflated) load reduction as measured by the DRA and another that provides a share of the  profit that accrues to the DRA through the demand response event to the customers. The contract design, its properties, and the interactions of the customers under the contract were discussed and illustrated.

The paper opens many directions for future work. We can consider the dynamic case when the customers need to be incentivized to participate in demand response repeatedly. Particularly interesting is the case when the customers also gain a signal about the total load reductions on a particular day and can alter their strategies accordingly. Another problem that should be considered  is when the DRA is able to observe only the sum of the profit due to the effort by multiple customers and thus the payment cannot be based on individual efforts of each customer. This may lead to the problem of `free-riding' in which some customers seek payment in spite of not putting in any effort, by relying on the efforts of other customers.  

\appendices
\section{}
\label{appendix_1}
\textbf{Proof of Lemma \ref{lem0}}
\begin{proof}
If the DRA has accurate knowledge of the true load reduction $x_i$ at time $t_4,$ the utility of the DRA is  given by $\Pi=\sum_{i=1}^{N}(x_i-P_i)$ and that of the $i$-th customer is given by $V_i=P_i-h(a_i)$. Thus, for any effort $a_{i}$ exerted by the customer, the payment $P_i=h(a_i)$ would solve the problem $\mathcal{P}_{1}$ (notice, in particular, that the individual rationality constraints will be satisfied with this payment). Substituting this payment in the utility of the DRA, we see that the 
%
level of effort $a_i$ by the $i$-th customer which maximizes the expected utility of the DRA is given by
 \[
 a_i^{\star}=\argmax \E[x_i-h(a_i)].
\]
Further, with this effort, the expected utility of the DRA is given by
\[
\E[\Pi^{\star}]=\sum_{i=1}^{N}\left(a_{i}^{\star}-h(a_{i}^{\star})\right),
\]
where we have used Assumption~\ref{asum3}. However, this payment can be implemented only if the DRA could observe $a_{i}$. We show that even if $a_i$ is unobservable for the DRA, and it can only observe $x_i$ at time $t_4$, it can incentivize the customer to exert the same effort and realize the  maximal utility $\E[\Pi^{\star}]$ for itself. To this end, consider the payment specified by
\begin{equation*}
P_i=x_i-  a_i^{\star}+h(a_{i}^{\star}).
\end{equation*}
With this payment, the expected utility of the $i$-th customer can be written as 
\[
\E[V_i]=\E[x_i-a_i^{\star}+h(a_{i}^{\star})]-h(a_i)=\E[x_i-h(a_i)]-\left(a_i^{\star}-h(a_{i}^{\star})\right).
\] 
Given the definition of $a_i^{\star}$, it is easy to see that the customer $i$ chooses $a_i=a_i^{\star}$ to maximize her expected utility. Further, the expected utility of the DRA is given by 
\[
\Pi=\sum_{i=1}^{N}\E[x_{i}-P_{i}]=\sum_{i=1}^{N}\E[a_i^{\star}-h(a_{i}^{\star})]=\E[\Pi^{\star}]
\]
Thus, the expected utility of the DRA is maximized with this choice of the payment.
\end{proof}

\textbf{Proof of Theorem \ref{thfirst} }
\begin{proof}
We prove by contradiction. Suppose that there exists a bonus function  $B_{i}(R_i)$  and an allocation $\{\alpha_{i}\}$ which simultaneously satisfies two conditions: (i) $\mathcal{C}_{1}:$ it incentivizes each customer $i$ to choose the strategy $R(x_i)=x_i$, and (ii) $\mathcal{C}_{2}:$ it incentivizes each customer to choose $a_i=a_i^{\star}$.  

By $\mathcal{C}_{1}$, the utility of the $i$-th customer is maximized if she reports  $R_i=x_i$. Since the portion $\alpha_i x_i$ of payment does not depend on $R_i,$ we can write 
\begin{align}
\nonumber&\frac{\partial{\E_{\mathcal E_{-i}}[V_i]}}{\partial{ R_i}}\biggr\rvert \underset{R_i=x_i}\:\:\:=\frac{\partial \E_{\mathcal E_{-i}}[B_{i}(R_i)]}{\partial R_i}-\beta_i(R_i-x_i)\biggr\rvert\underset{R_i=x_i}\:\:\:=0,\\
\label{eq:proof_thm1_cond1}&\qquad\qquad\qquad\Rightarrow \frac{\partial \E_{\mathcal E_{-i}}[B_{i}(R_i)]}{\partial R_i}=0\\
\nonumber&\frac{\partial^2{\E_{\mathcal E_{-i}}[V_i]}}{\partial{R_i}^2}\biggr\rvert  \underset{R_i=x_i}\:\:\:=\frac{\partial ^2\E_{\mathcal E_{-i}}[B_{i}(R_i)]}{\partial R_i^2}-\beta_i\biggr\rvert\underset{R_i=x_i}\:\:\:\leq0\\
\label{eq:proof_thm1_cond2}&\qquad\qquad\qquad\Rightarrow \frac{\partial ^2\E_{\mathcal E_{-i}}[B_{i}(R_i)]}{\partial R_i^2}\leq \beta_i.
\end{align}
(\ref{eq:proof_thm1_cond1}) implies that 
\[\E_{\mathcal E_{-i}}\bigg[\frac{\partial B_{i}(R_i)}{\partial R_i}\bigg]=0,\]
or, in turn, that
\[\int_{\mathcal E_{-i}}\frac{\partial B_{i}(R_i)}{\partial R_i}f_{X_{-i}}dX_{-i}=0.\]
Since this equation should hold for all $R_{i}$, we must have $B_{i}(R_i)=c$ for some constant $c$. In other words, the DRA  provides  a fixed compensation to the customer  irrespective of what she reports. Further, with truthful reporting, Lemma~\ref{lem0} implies that the payment
\begin{equation}
\label{eq:temp_thm_1}
P=\sum_{i=1}^{N}P_{i}=\sum_{i=1}^{N}\left(x_i-  a_i^{\star}+h(a_i^{\star})\right)
\end{equation}  
maximizes the utility of the DRA while ensuring choice of the desired action $a_i^{\star}$  by the customers. The fact that $B_{i}(R_i)=c$ and that the payment is given by~(\ref{eq:temp_thm_1}) implies that the following conditions must be met
\begin{equation*}
B_{i}(R_i)=-\sum_{i=1}^{N} a_i^{\star}\qquad\textrm{ and }\qquad
\sum _{i=1}^{N}\alpha_i=1.
\end{equation*}
However, this allocation violates Assumptions \ref{asum1} and \ref{asum2}. Thus, our supposition is wrong and there does not exist a payment function that simultaneously  guarantees $\mathcal{C}_{1}$ and $\mathcal{C}_{2}.$
\end{proof}

\textbf{Proof of Theorem \ref{pr0}}
\begin{proof}
We can write~(\ref{Rstar}) as
\begin{multline*}
R_i^*=\argmax_{R_{i}} \E_{\mathcal{E}_{-i}}[V_{i}(B_{i}(R_i), \alpha_i)]\\
=\argmax_{R_{i}}[\alpha_i x_i+\E_{\mathcal{E}_{-i}}[B_{i}(R_i)]-\beta_i \frac{(R_i-x_i)^2}{2}- h(a_i)].
\end{multline*}
For optimality, we set 
\begin{align*}
&\frac{\partial \E_{\mathcal{E}_{-i}}[V_{i}(B_{i}(R_i), \alpha_i)]}{\partial R_i}=0\\
 \Rightarrow &\frac{\partial \E_{\mathcal{E}_{-i}}[B_{i}(R_i)]}{ \partial R_i}-\frac{\beta_i}{2} \frac{\partial (R_i-x_i)^2}{ \partial R_i}=0\\
\Rightarrow &R^*_i-x_i=\frac{1}{\beta_{i}}\frac{\partial \E_{\mathcal{E}_{-i}}[B_{i}(R_i)]}{ \partial R_i}\bigg\rvert_{R_{i}=R^*_{i}}.
\end{align*}
Note that the second derivative
\[
\frac{\partial^2 \E_{\mathcal{E}_{-i}}[V_i]}{\partial R_i^2}=\frac{\partial^2 \E_{\mathcal{E}_{-i}}[B_{i}(R_i)]}{\partial R_i^2}-\beta_i<0,
\] 
 given the concavity of $B_{i}(R_i)$. Thus, $R_i^*$ that satisfies \eqref{Rstar2} is indeed a maximizer.
 \end{proof}

\textbf{Proof of Theorem \ref{pro22}}
\begin{proof}
The effort $a_i$ is chosen to maximize the  utility of the customer given that the optimal report is calculated as given in the equation~\eqref{Rstar2}. Thus, 
\begin{align*}
a_i^*&=\argmax_{a_{i}} \E_{\mathcal{E}}\left[V_i(B_{i}(R^*_i), \alpha_i)\right]\\
&=\argmax_{a_{i}} \E_{\mathcal{E}}\left[\alpha_i x_i+B_{i}(R_i^{*})-\beta_i \frac{(R_i^{*}-x_i)^2}{2} - \frac{a_i^2}{2}\right]\\
&=\argmax_{a_{i}} \left[\alpha_i a_i+\E_{\mathcal{E}}\left[B_{i}(R_i^{*})-\frac{\beta_i}{2} (R_i^{*}-x_i)^2\right] - \frac{a_i^2}{2}\right],
\end{align*}
where we have used Assumption~\ref{asum3}.
For optimality, we set 
\[
\frac{\partial \E_{\mathcal{E}}\left[V_i(B_{i}(R^*_i), \alpha_i)\right]}{\partial a_i}=0.\]
This condition yields
 \begin{equation} \alpha_i+\frac{\partial \E_{\mathcal{E}}\left[B_{i}(R_i^{*})-\frac{\beta_i}{2} (R_i^{*}-x_i)^2\right]}{ \partial a_i}-a_i=0.
 \label{astar11}
\end{equation} 
Using Theorem~\ref{pr0}, we can write this condition as
\[a_i^*=\alpha_i+\frac{\partial \E_{\mathcal{E}}\left[B_{i}(R^*_i)-\frac{1}{2\beta_i}\frac{\partial B_{i}(R_i)}{ \partial R_i}\bigg\rvert_{R_{i}=R^*_{i}}\right]}{ \partial a_i}\Bigg\rvert_{a_{i}=a_{i}^{*}}.\]
Finally, given the concavity of $\E_{\mathcal{E}}\left[V_i(B_{i}(R^*_i), \alpha_i)\right]$ in $a_i$, we note that $a_i^*$ is a maximizer.  


\end{proof}

\textbf{Proof of Lemma \ref{pro1}}
\begin{proof}
\eqref{v1} implies that the utility of the $i$-th customer, $V(B(R_i), \alpha_i)$, depends on the parameter $x_{i}$ through the report $R_{i}$ and the bonus $B(R_{i})$. For an incentive compatible contract, the utility of the $i$-th customer  is maximized when she chooses to calculate her report (and consequently receive the bonus) based on $\hat{x}_i=x_i$. Envelop theorem thus implies that the optimal choice of the parameter should satisfy
 \begin{equation}
   \frac{dV(B(R_i), \alpha_i)}{d x_i}=\frac{\partial V(B(R_i), \alpha_i)}{\partial x_i}.
   \label{I1}
  \end{equation}
We note that 
  \begin{align*}
& \frac{dV(B(R_i), \alpha_i)}{d x_i}\\
&= \frac{\partial V(B(R_i), \alpha_i)}{\partial B} \frac{\partial B(R(x_i))}{\partial x_i}+\frac{\partial V(B(R_i), \alpha_i)}{\partial R_i}  \frac{\partial R(x_i)}{\partial x_i}\\&\qquad\qquad\qquad\qquad+\frac{\partial V(B(R_i), \alpha_i)}{\partial x_i}  \frac{\partial x_i}{\partial x_i}\\   
&= \frac{\partial B(R_i)}{\partial x_i}-\beta_i(R_i-x_i)\frac{\partial R_i}{\partial x_i}+\frac{\partial V(B(R_i), \alpha_i)}{\partial x_i}.
\end{align*}
Thus, \eqref{I1} yields 
 \[
  \frac{\partial B(R_i)}{\partial x_i}= \beta_i(R_i-x_i)\frac{\partial R_i}{\partial x_i}.\]

To evaluate the second order condition, we start with the first order incentive compatibility condition as 
 \begin{equation*}
   \frac{dV(B(R_i), \alpha_i)}{d \hat{x}_i}\Bigg\vert_{\hat{x}_{i}=x_{i}}=0,
  \end{equation*}
  and differentiate both sides with respect to $x_{i}$ to obtain
  \[
  \frac{\partial^2 V(B(R_i), \alpha_i)}{\partial \hat{x}_i^2} \Bigg\vert_{\hat{x}_{i}=x_{i}}  \frac{\partial \hat{x}_i}{\partial x_i}\Bigg\vert_{\hat{x}_{i}=x_{i}}+ \frac{\partial^2 V(B(R_i), \alpha_i)}{\partial x_i \partial \hat{x}_i}\Bigg\vert_{\hat{x}_{i}=x_{i}}=0.
  \]
The second-order condition for the optimal choice of $\hat{x}_i$ implies that 
$$\frac{\partial^2 V(B(R_i), \alpha_i)}{\partial \hat{x}_i^2} \Bigg\vert_{\hat{x}_{i}=x_{i}}  \leq 0.$$ 
Thus, we can write  
\begin{align*}
&\frac{\partial^2 V(B(R_i), \alpha_i)}{\partial x_i \partial \hat{x}_i}\Bigg\vert_{\hat{x}_{i}=x_{i}}\geq 0\\
\Rightarrow& \frac{\beta_{i}}{2}\frac{\partial^2 (R_i-x_i)^{2}}{\partial x_i^2} \frac{\partial R(x_i)}{\partial x_i} \geq0  \\
\Rightarrow &\frac{\partial R(x_i)}{\partial x_i}\geq0.
\end{align*}
In particular, for the contract $B(R_i)=\mu \left(R_i-R_{0}\right)$, it is straightforward to see that these conditions reduce to \[\frac{\partial R_i}{\partial x_i}(\mu-\beta_i (R_i-x_i))=0, \quad \frac{\partial R_i}{\partial x_i}\geq0.\]

\end{proof}
\textbf{Proof of Theorem \ref{pro44}}

\begin{proof}
First, the optimal load reduction is specified by Theorem~\ref{pr0}. Thus, according to \eqref{Rstar2}, the optimal report is specified as
\begin{align*}
R^*_i&=x_{i}+\frac{1}{\beta_{i}}\frac{\partial \E_{\mathcal{E}_{-i}}[B_{i}(R_i)]}{ \partial R_i}\bigg\rvert_{R_{i}=R^*_{i}}\\
&=x_i+\frac{\mu}{\beta_i}.
\end{align*}
Using this report, we can calculate the optimal effort exerted by the customer using Theorem~\ref{pro22}. Thus, from~(\ref{effortstar}), we have
\begin{align*}
a_i^*=&\alpha_i+\frac{\partial \E_{\mathcal{E}}\left[B_{i}(R^*_i)-\frac{1}{2\beta_i}\left(\frac{\partial B_{i}(R_i)}{\partial R_i}\big\rvert_{R_{i}=R_{i}^{*}}\right)^2\right]}{\partial a_i}\Bigg\rvert_{a_{i}=a_{i}^{*}}\\
=&\alpha_i+\frac{\partial \E[\mu (R_i^*-R_0)-\frac{\mu^2}{2\beta_i}]}{\partial a_i}\\=&\alpha_i+\frac{\partial \E[\mu (x_i+\frac{\mu}{\beta_i}-R_0)-\frac{\mu^2}{2\beta_i}]}{\partial a_i}\\
=&\mu+\alpha_i.
\end{align*}
The optimal contract can now be specified using Theorem~\ref{proposition_opt_contract}. First,~\eqref{optalfa} implies the relation
\begin{align}
\nonumber \alpha_i^*&=1-\frac{a_i^*+\frac{\partial \E_{\mathcal{E}}[B_{i}(R_i)]}{\partial \alpha_i}\Big\rvert_{\alpha_{i}=\alpha_{i}^{*}}}{\frac{\partial a^*_i}{\partial \alpha_i}\Big\rvert_{\alpha_{i}=\alpha_{i}^{*}} }\\
\nonumber &=1-(\mu^*+\alpha^*_i)-\frac{\partial \E_{\mathcal{E}}[\mu^* (x_i+\frac{\mu^*}{\beta_i}-R_0)]}{\partial \alpha_i}\\
&=0.5-\mu^*.
\label{eq:temp_1}
\end{align}
On the other hand,~(\ref{bp3}) implies  the relation
\begin{align}
\nonumber \mu^*&= \argmax\left[ (1-\alpha^*_i)a_i^*-\E_{\mathcal{E}}[B^*(R_i^*)]\right]\\
 \nonumber&=\argmax\left[ (1-\alpha_i)(\mu+\alpha_i)-\E[\mu (x_i+\frac{\mu}{\beta_i}-R_0)]\right]\\
\nonumber  &=\argmax\left[(1-\alpha_i)(\mu+\alpha_i)-\mu (\mu+\alpha_i+\frac{\mu}{\beta_i}-R_0)\right]\\
&=\frac{0.5-\alpha^*+\frac{R_0}{2}}{1+\frac{1}{\beta}}.
\label{eq:temp_2}
\end{align}
From~(\ref{eq:temp_1}) and~(\ref{eq:temp_2}), we solve $\mu^*=\frac{c \beta_i}{2}.$ Finally, it is straight-forward to check that the optimal contract satisfies all the constraints in the problem.
\end{proof}

\textbf{Proof of Lemma \ref{pro551}}

\begin{proof}
With the proposed bonus function, the utility of customer $i$ depends on the true load reduction by the other customers since $V_i$   is a function of   $x_j$, where $j=1, \cdots, N$, $j\neq i$. Since Assumption \ref{asum4} states that customer $i$ does not have access to the load savings ${\mathcal{E}_{-i}}$ by other customers at the time of generating the report and obtaining the consequent bonus, the contract is incentive compatible if the expected utility of customer $i$ (with expectation taken with respect to ${\mathcal{E}_{-i}}$) is maximized if the customer $i$ calculates the report based on her true load saving $x_{i}$. Now, following the proof of Lemma~\ref{pro1}, we can obtain that necessary and sufficient conditions for incentive compatibility as\begin{align}
\label{second_incentive_first}
&\frac {\partial {\E_{\mathcal{E}_{-i}}\left[B(R_i, \sum_{j=1}^{N} R_j)\right]}}{\partial {x_i}}= \beta_i(R_i-x_i)\frac {\partial {R_i}}{\partial {x_i}}\\
\nonumber &\frac{\partial R_i}{\partial x_i}\geq0.
\end{align}
In particular, for the contract $B_i(\cdot)=R_i(\lambda-\sum _{j=1}^{N}R_j)$, it is straightforward to see that these conditions reduce to  
\begin{align*}
&\frac{\partial R_i}{\partial x_i}\left[\lambda+\beta_i x_i-(\beta_i+2)R_i-\sum\limits_{\substack{{j=1}\\{j\neq i}}}^{N} \E_{\mathcal{E}_{-i}}[R_j]\right]= R_i \frac{\partial \sum\limits_{\substack{{j=1}\\{j\neq i}}}^{N} \E_{\mathcal{E}_{-i}}[R_j]}{\partial x_i}, \\& \frac{\partial R_i}{\partial x_i}\geq0.
\end{align*}
\end{proof}

%
%
%

\textbf{Proof of Theorem \ref{theorem2}}
\begin{proof}
We first prove that the contract structure and the effort and the report specified in the theorem statement specifies a Nash equilibrium and then show that the equilibrium is unique and it always exists. To this end, we start by identifying the optimal report as specified by Theorem~\ref{pr0} with the specified bonus function and the assumption $\beta_{i}=\beta$. 

\textit{Proof of~(\ref{eq:report_shared}):} We have
\begin{align}
\nonumber R^*_i&=x_{i}+\frac{1}{\beta}\frac{\partial \E_{\mathcal{E}_{-i}}[B_{i}(R_i)]}{ \partial R_i}\bigg\rvert_{R_{i}=R^*_{i}}\\
\nonumber &=x_{i}+\frac{1}{\beta}\frac{\partial \E_{\mathcal{E}_{-i}}[R_i(\lambda-R_i-\sum\limits_{\substack{{j=1}\\{j\neq i}}}^{N}R_j)]}{ \partial R_i}\bigg\rvert_{R_{i}=R^*_{i}}\\
\nonumber &=x_{i}+\frac{1}{\beta}\frac{\partial\left(R_i(\lambda-R_i-\E_{\mathcal{E}_{-i}}[\sum\limits_{\substack{{j=1}\\{j\neq i}}}^{N}R_j])\right)}{ \partial R_i}\bigg\rvert_{R_{i}=R^*_{i}}\\
\nonumber &=x_{i}+\frac{1}{\beta}\left(\lambda-R_i^{*}-\sum\limits_{\substack{{j=1}\\{j\neq i}}}^{N}\E_{\mathcal{E}_{-i}}[R_j]- R_i^{*}\right)\\
\label{14}\Rightarrow R^{*}_{i} &=\frac{\lambda-\sum\limits_{\substack{{j=1}\\{j\neq i}}}^{N} \E_{\mathcal{E}_{-i}}[R_j]+\beta x_i}{\beta+2},
\end{align}
where we have used the fact that according to Assumption~\ref{asum4}, $R_{j}$ is a function of $x_{j}$ only and  $\sum\limits_{\substack{{j=1}\\{j\neq i}}}^{N}\E_{\mathcal{E}_{-i}}[R_j]$ is not a function of $R_i.$ We take the expectation of both sides of~(\ref{14}) with respect to $\mathcal{E}$ to obtain
\begin{align}
\nonumber\E_{\mathcal{E}}(R_i^*)&=\frac {\lambda-\sum\limits_{\substack{{j=1}\\{j\neq i}}}^{N} \E_{\mathcal{E}}(R_j^*)+\beta a_i}{\beta+2}\\
&=\frac {\lambda-\sum\limits_{j=1}^{N} \E_{\mathcal{E}}(R_j^*)+\beta a_i}{\beta+1}\label{ERN}\\
\Rightarrow \sum\limits_{j=1}^{N} \E_{\mathcal{E}}(R_j^*)&= \frac{N\lambda +\beta \sum\limits_{j=1}^{N} a_j }{\beta+1+N}.\label{ERB}
\end{align}

Subtracting \eqref{ERN} from \eqref{ERB} thus yields
\begin{align}
\nonumber
&\sum\limits_{\substack{{j=1}\\{j\neq i}}}^{N} \E_{\mathcal{E}}(R_j^*)\\
\nonumber&=\frac{\lambda(N-1)}{\beta+1+N}+\frac{\beta}{(\beta+1)}\left( \frac{(\beta+2)\sum\limits_{j=1}^{N} a_j-a_i(\beta+1+N) }{(\beta+1+N)}\right)\\ 
\nonumber&=\frac{\lambda(N-1)}{\beta+1+N}+\frac{\beta}{(\beta+1)}\left(\frac{(\beta+2)\sum\limits_{\substack{{j=1}\\{j\neq i}}}^{N} a_j-(N-1)a_i }{(\beta+1+N)}\right)\\ 
&=G-Fa_{i}.\label{1j} 
\end{align}
Substituting this value in \eqref{14}, we obtain~(\ref{eq:report_shared}).

\textit{Proof of~(\ref{eq:action_shared}):}
The optimal choice of effort $a^*_i$ is specified by Theorem~\ref{pro22}. With the given bonus function and the assumptions $\beta_{i}=\beta$ and $\alpha_{i}=\alpha/N,$ we obtain
\begin{align*}
a^*_i&=\argmax_{a_{i}} \E_{\mathcal{E}}\left[\frac{\alpha}{N} x_i+ R^*_i(\lambda-\sum\limits_{j=1}^{N} R^*_j)-\frac{\beta}{2}(R^*_i-x_i)^2-\frac{a_i^2}{2}\right]\\
&=\argmax_{a_{i}} \left(\frac{\alpha}{N} a_i+ \E_{\mathcal{E}}\left[R^*_i(\lambda-\sum\limits_{j=1}^{N} R^*_j)-\frac{\beta}{2}(R^*_i-x_i)^2\right]-\frac{a_i^2}{2}\right).
\end{align*}
Using the first derivative condition to evaluate the optimal choice of $a_i,$ we set 
\begin{align}
\nonumber a_i^{*}-\frac{\alpha}{N}&=\frac{\partial{\E_{\mathcal{E}} \left[R_i^*(\lambda-\sum\limits_{j=1}^{N} R^*_j)\right]}}{\partial a_i}\Bigg\vert_{a_{i}=a_i^{*}}\\&\qquad\qquad-\frac{\beta}{2}\frac{\partial{\E_{\mathcal{E}}\left[(R_i^*-x_i)^2\right]}}{\partial a_i}\Bigg\vert_{a_{i}=a_i^{*}}.
\label{EVV}
\end{align}
We evaluate the terms on the left hand side as follows.
\begin{align}
\nonumber&\frac{\partial{\E_{\mathcal{E}} \left[R_i^*(\lambda-\sum\limits_{j=1}^{N} R^*_j)\right]}}{\partial a_i}\\
\nonumber&=\lambda\frac{\partial{\E_{\mathcal{E}} \left[R_i^*\right]}}{\partial a_i}-\frac{\partial{\E_{\mathcal{E}} \left[\left(R_i^*\right)^{2}\right]}}{\partial a_i}-\frac{\partial{\E_{\mathcal{E}} \left[R_i^*\sum\limits_{\substack{{j=1}\\{j\neq i}}}^{N} R^*_j\right]}}{\partial a_i}\\
&=\lambda\frac{\partial{\E_{\mathcal{E}} \left[R_i^*\right]}}{\partial a_i}-\frac{\partial{\E_{\mathcal{E}} \left[\left(R_i^*\right)^{2}\right]}}{\partial a_i}-\frac{\partial{\E_{\mathcal{E}} \left[R_i^*\right]\E_{\mathcal{E}}\left[\sum\limits_{\substack{{j=1}\\{j\neq i}}}^{N} R^*_j\right]}}{\partial a_i}\label{eq:ind_expectations_1}\\
\nonumber&=\lambda\frac{\partial{\E_{\mathcal{E}} \left[\frac{\lambda-\left(G-F a_i\right)+\beta x_i}{\beta+2}\right]}}{\partial a_i}-\frac{\partial{\E_{\mathcal{E}} \left[\left(\frac{\lambda-\left(G-F a_i\right)+\beta x_i}{\beta+2}\right)^{2}\right]}}{\partial a_i}\\
&\qquad-\frac{\partial{\E_{\mathcal{E}} \left[\frac{\lambda-\left(G-F a_i\right)+\beta x_i}{\beta+2}\right]\left(G-Fa_{i}\right)}}{\partial a_i}
\label{eq:ind_expectations_2}\\
\nonumber&=\lambda\frac{F +\beta}{\beta+2}-\frac{2\left((F+\beta)(\lambda-G)+(F+\beta)^{2}a_i\right)}{\left(\beta+2\right)^{2}}\\
&\qquad+ \frac{\left(\lambda-G+\left(F +\beta\right)a_i\right)F+\left(G-Fa_{i}\right)\left(F +\beta\right)}{\beta+2}  
\label{eq:ind_expectations_3}
\end{align}
where~(\ref{eq:ind_expectations_1}) follows from Assumption \ref{assum11} and the fact that the report $R_{i}$ is not dependent on $x_{j}$ for all $j\neq i$, ~(\ref{eq:ind_expectations_2}) follows from~(\ref{eq:report_shared}) and~(\ref{1j}),~(\ref{eq:ind_expectations_3}) follows from Assumption~\ref{asum3} and straight-forward algebraic manipulation. Similar manipulation yields
\begin{equation}
\frac{\partial{\E_{\mathcal{E}}\left[(R_i^*-x_i)^2\right]}}{\partial a_i}=\frac{(2(F-2)(\lambda-G)-8F a_i+8a_i+ 2F^2 a_i)}{(\beta+2)^2}.
\label{eq:ind_expectations_4}
\end{equation}
Substituting \eqref{eq:ind_expectations_3} and \eqref{eq:ind_expectations_4} in \eqref{EVV} and solving for $a_{i}^{*}$ yields 
\begin{align}
 \label{p44} a_i^*&=\frac{\frac{\alpha}{N}+\frac{\beta+F}{\beta+2}(\lambda-G)}{1+\frac{2(\beta+F)^2}{(\beta+2)^2}-\frac{2(\beta+F)F}{(\beta_i+2)}+\frac{\beta(4-4F+F^2)}{(\beta+2)^2}}\\
\label{aopt12}&=\frac{\frac{\alpha}{N}+A\lambda-B\sum\limits_{\substack{{j=1}\\{j\neq i}}}^{N} a_j^* }{C}.
\end{align}
Summing  \eqref{aopt12} $N$ times for $i=1,\cdots,N,$ we obtain
\begin{align*}
(C+(N-1)B)\sum_{j= 1}^{N} a^*_j &=N(\frac{\alpha}{N}+A\lambda)\\
\Rightarrow \sum_{j= 1}^{N} a^*_j =\frac{N(\frac{\alpha}{N}+A\lambda)}{C+(N-1)B}&=\frac{\alpha+NA\lambda}{C+(N-1)B}.
\end{align*}
Substituting in \eqref{aopt12} finally yields~(\ref{eq:action_shared}). The second derivative condition implies that $a_{i}^*$ is indeed a maximizer.

\textit{Proof of~(\ref{eq:bonus_shared}):} Given~(\ref{eq:action_shared}) and Assumption~\ref{asum3}, we have that the overall load reduction
\begin{align*}
\Gamma&=\sum_{i=1}^{N}\E_{\mathcal{E}}\left[x_{i}\right]\\
&=Na_{i}^*\\
&=N\frac{\alpha_i^*+A\lambda^*}{C+(N-1)B}.
\end{align*}
A simple realignment yields~(\ref{eq:bonus_shared}).

\textit{Proof of~(\ref{eq:share_shared}):} Given~(\ref{eq:bonus_shared}), the optimal share is specified by~(\ref{optpay}). With the specified bonus function, we have
\begin{align}
\nonumber \alpha^*_i&= \argmax_{\alpha_{i}} \E_{\mathcal{E}}\left[\Pi\left(\{B_{i}(R_i^*)\}, \{\alpha_i^*\}\right)\right]\\
 &= \frac{1}{N}\argmax_{\alpha}\E_{\mathcal{E}}\left[(1-\frac{\alpha}{N})\sum_{i=1}^{N}x_i^*-\sum_{i=1}^{N}R^*_{i} (\lambda^{*}- \sum_{j=1}^{N}R^*_{j})\right]. 
\label{ps34}
\end{align}
From  \eqref{14} and \eqref{ERB}, we can write 
\begin{align}
\nonumber \sum_{i=1}^{N}R_{i}^*=&\frac{N\lambda^*-(N-1)\sum\limits_{j=1}^{N} \E\left[R_j^*\right]+\sum\limits_{j=1}^{N}\beta x_i}{\beta+2}\\ 
=&E\lambda^*+B_1\sum_{j= 1}^{N} a^*_j +C_1\sum_{j= 1}^{N} x_j,
\label{ps341}
\end{align}
where $E=\frac{N}{\beta+1+N}$, $C_1=\frac{\beta}{\beta+2}$, and $B_1=-\frac{(N-1)\beta}{(\beta+2)(\beta+1+N)}$.
Substituting~(\ref{ps341}) in \eqref{ps34}, we obtain
\begin{align}
\nonumber \alpha^*_i&=\frac{1}{N}\argmax_{\alpha}\Biggl((1-\frac{\alpha}{N}) \sum_{j= 1}^{N} a^*_j \\
\nonumber&- \E_{\mathcal{E}}\Biggl[ \left(E\lambda^*+B_1\sum_{j= 1}^{N} a^*_j +C_1\sum_{j= 1}^{N} x_j\right)\\&\qquad\qquad\left((1-E)\lambda^*-B_1\sum_{j= 1}^{N} a^*_j-C_1\sum_{j= 1}^{N} x_j\right)\Biggr]\Biggr).
\label{w95}
\end{align}
The first order derivative condition implies that $\alpha^*$ is given by the equation
\begin{multline*}
-\frac{\sum_{j= 1}^{N} a^*_j}{N}+\frac{1}{C+(N-1)B}\Bigl[(1-\frac{\alpha^*}{N})\\+(B_1+C_1)\left(-\lambda^*+2(E\lambda^* (B_1+C_1)\sum_{j= 1}^{N} a^*_j)\right)\Bigr]=0,
\end{multline*}
which yields
\begin{align}
\nonumber \alpha_{i}^*&=\frac{1-\lambda^*[A(1-2ND)+(B_1+C_1)(1-2E)]}{2(1-ND)}\\ \nonumber
&=\frac{1-\lambda^*[A(1-2ND)+\frac{\beta(1-2E)}{\beta+1+N}]}{2(1-ND)}.
\end{align}

\textit{Proof of optimality of the contract:} $\alpha_{i}^*$ and $\lambda_{i}^*$ have been chosen to satisfy~(\ref{optpay}) and the constraint on the total load reduction. It is easy to verify that the individual rationality and incentive compatibility constraints are met. Thus, the contract is optimal in the sense of solving Problem $\mathcal{P}_{3}$. That the Nash equilibrium always exists and is unique is clear from the above derivation of the contract and the optimal actions and reports.

 \end{proof}

\bibliographystyle{IEEEtran}
%

\begin{IEEEbiography}{Donya Ghavidel-Dobhakhshari}
received her B.S. degree in Electrical Engineering at Iran University of Science and Technology, Tehran, Iran, 2014, and her M.S. degree in Electrical Engineering from the University of Notre Dame, Notre Dame, IN, USA, in 2016, where she is currently pursuing the Ph.D. degree in Electrical Engineering.
Her research interests lie in game theory, mechanism design, and their applications to power systems and networks.
\end{IEEEbiography}

\begin{IEEEbiography}{Vijay Gupta}
Vijay Gupta is a Professor in the Department of Electrical Engineering at the University of Notre Dame, having joined the faculty in January 2008. He received his B. Tech degree at Indian Institute of Technology, Delhi, and his M.S. and Ph.D. at California Institute of Technology, all in Electrical Engineering. Prior to joining Notre Dame, he also served as a research associate in the Institute for Systems Research at the University of Maryland, College Park. He received the 2013 Donald P. Eckman Award from the American Automatic Control Council and a 2009 National Science Foundation (NSF) CAREER Award. His research and teaching interests are broadly at the interface of communication, control, distributed computation, and human decision making.
\end{IEEEbiography}

\end{document}